\documentclass[a4paper,12pt]{amsart}

\usepackage[headings]{fullpage}

\calclayout

\makeindex

\makeatother

\usepackage{amsfonts,graphics,amsmath,amsthm,amsfonts,amscd,amssymb,amsmath,latexsym,euscript, enumerate}
\usepackage{epsfig}
\usepackage{xcolor}
\usepackage{flafter}
\usepackage[all,cmtip,line]{xy}
\usepackage{array}
\usepackage[english]{babel}
\usepackage{overpic}
\usepackage{subfig}
\usepackage{multirow}
\usepackage{microtype}
\usepackage{wrapfig}
\usepackage{tabularray}
\usepackage{longtable}
\usepackage{supertabular}
\usepackage{caption}
\usepackage{tikz}
\usetikzlibrary{positioning}
\usepackage{tikz-cd}
\usepackage{float}
\allowdisplaybreaks

\usepackage{graphicx}
\usepackage{hyperref}
\hypersetup{
    colorlinks=true,    
    linkcolor=blue,          
    citecolor=blue,      
    filecolor=blue,      
    urlcolor=blue           
}
\usepackage[anythingbreaks]{breakurl}

\newtheorem{theorem}{Theorem}[section]
\newtheorem{lemma}[theorem]{Lemma}
\newtheorem{proposition}[theorem]{Proposition}

\newtheorem*{theorem*}{Theorem}

\theoremstyle{plain}
\newtheorem{corollary}[theorem]{Corollary}

\theoremstyle{definition} 
\newtheorem{definition}[theorem]{Definition}
\newtheorem{definition-lemma}[theorem]{Definition-Lemma}

\newtheorem{example}[theorem]{Example}

\newtheorem{remark}[theorem]{Remark}

\numberwithin{equation}{section}

\newcommand{\R}{\mathbb{R}}
\newcommand{\Z}{\mathbb{Z}}

\newcommand{\Q}{\mathbb{Q}}

\def\P{\mathbb{P}}

\DeclareMathOperator{\ord}{ord}

\DeclareMathOperator{\Pic}{Pic}

\def\Pic{\operatorname{Pic}}

\def\Spec{\operatorname{Spec}}

\def\mult{\operatorname{mult}}

\def\Supp{\operatorname{Supp}}

\def\lct{\operatorname{lct}}

\def\Val{\operatorname{Val}}

\usepackage{mathtools}

\DeclarePairedDelimiterX{\norm}[1]{\lVert}{\rVert}{#1}

\title[Anticanonical MMP for pklt pairs]
{Structure of the Anticanonical Minimal Model Program for Potentially klt Pairs}

\begin{document}

\author[D.~Kim]{Donghyeon Kim}
\author[D.-W.~Lee]{Dae-Won Lee}
\address[Donghyeon Kim]{Department of Mathematics, Yonsei University, 50 Yonsei-ro, Seodaemun-gu, Seoul 03722, Republic of Korea}
\email{narimial0@gmail.com, whatisthat@yonsei.ac.kr}
\address[Dae-Won Lee]{Department of Mathematics, Ewha Womans University, 52 Ewhayeodae-gil, Seodaemun-gu, Seoul 03760, Republic of Korea}
\email{daewonlee@ewha.ac.kr}

\thanks{The authors are partially supported by Samsung Science and Technology Foundation under Project Number SSTF-BA2302-03. The second author is partially supported by Basic Science Research Program through the National Research Foundation of Korea (NRF) funded by the Ministry of Education (No. RS-2023-00237440 and 2021R1A6A1A10039823).}

\subjclass[2010]{14B05, 14E05, 14E30}
\date{\today}
\keywords{Anticanonical minimal model program, Potentially klt, redundant blow-up}

\begin{abstract}
We provide an alternative proof of the existence of the anticanonical
minimal model program for potentially klt pairs, assuming that the anticanonical divisor admits a birational Zariski decomposition. Moreover, we show that any partial anticanonical MMP starting from a potentially klt pair and containing no flips admits, after passing to $\Q$-factorial terminal models, an upstairs factorization of \((K+\Delta)\)-negative maps.
\end{abstract}

\maketitle


\section{Introduction}\label{sect:intro}
The minimal model program (MMP) for the anticanonical divisor
$-K_X$ lies at the interface between Fano geometry and the classification of algebraic varieties whose anticanonical class is pseudoeffective. A central challenge in this direction is to identify the largest natural class of pairs for which such an MMP can be run and to describe the structure of the resulting MMP.

The notion of a \emph{potentially klt} (pklt) pair was introduced
by Choi--Park \cite{CP16}. This condition captures precisely those pairs for which, if a $-(K_X+\Delta)$-minimal model $\varphi\colon (X,\Delta)\dashrightarrow (Y,\Delta_Y)$ exists, then the pair $(Y,\Delta_Y)$ is klt.  In particular, \cite{CP16} established that the potentially non-klt locus of a pklt pair is mapped birationally onto the non-klt locus on any anticanonical minimal model, and gave a characterization of Fano type varieties.

Recently, \cite{CJKL25} showed, via a valuative approach, that the
log canonical threshold of any pklt triple can be computed by a
quasi-monomial valuation, extending the celebrated result of Xu
\cite{Xu20} from the klt setting to the pklt setting.
As a consequence, they established the existence of a
$-(K_X+\Delta)$-MMP with scaling of an ample divisor for pklt pairs and obtained an anticanonical minimal model in dimension two.
In a related direction, \cite{KL} proved that the
geometric generic fiber of a fibration whose closed fibers are of $\varepsilon$-lc log Calabi--Yau type is pklt, and that both the
anticanonical MMP and the $D$-MMP can be run on the geometric generic fiber for any big $\Q$-Cartier divisor $D$.

The purpose of this paper is twofold. First, we give an alternative proof of Theorem \ref{CJKL} under the assumption that the anticanonical divisor admits a birational Zariski decomposition. Our proof of Theorem \ref{CJKL} does not rely on the existence of a quasi-monomial valuation computing the log canonical threshold (cf. \cite[Question 6.15]{Leh14} and \cite[Theorem 1.1]{CJKL25}). 

\begin{theorem}[{cf. \cite[Corollary 1.4]{CJKL25}}] \label{CJKL}
Let $(X,\Delta)$ be a pklt pair such that $-(K_X+\Delta)$ admits a  birational Zariski decomposition. Then there exists a $-(K_X+\Delta)$-MMP with scaling of an ample divisor. Moreover, if $\dim X=2$, then there exists an anticanonical minimal model of $(X,\Delta)$.
\end{theorem}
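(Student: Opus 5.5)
The strategy is to reduce the $-(K_X+\Delta)$-MMP to an ordinary MMP for a klt pair. The birational Zariski decomposition supplies the boundary for that klt pair, and we then invoke the standard existence results of Birkar--Cascini--Hacon--McKernan.

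\textbf{Step 1: the Zariski decomposition.} Write $L=-(K_X+\Delta)$. By assumption there is a birational model $f\colon W\to X$ with $f^*L=P+N$, where $P$ is nef, $N\ge 0$, and $H^0(W,\lfloor mP\rfloor)\cong H^0(W,\lfloor mf^*L\rfloor)$ for all $m$. We may take $W$ to be a log resolution of $(X,\Delta)$.

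\textbf{Step 2: a klt complement.} Recall that pklt means the following: for every $\Gamma\sim_{\Q}L$ with $\Gamma\ge0$ and with $\Gamma$ ``general'' in the asymptotic sense, $(X,\Delta+\epsilon\Gamma)$ is klt. Equivalently, $(X,\Delta+f_*N_\sigma)$ is klt, where $N_\sigma$ is the negative part; this is one of the characterizations in \cite{CP16}.

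Using that $P$ is nef (and big or not), pick a small rational $\epsilon>0$. Choose $P_\epsilon\sim_{\Q}P+\epsilon A_W$ general, where $A_W$ is a small ample perturbation. Set
\[
B_\epsilon := \Delta+(1-\delta)\bigl(f_*N+f_*P_\epsilon\bigr),
\]
and observe that $K_X+B_\epsilon\sim_{\Q}\delta\,(K_X+\Delta)+(1-\delta)\epsilon f_*A_W$ up to rescaling.

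The cleaner formulation is as follows. For rational $0<t\ll1$, choose $D\in|L|_{\Q}$ of the form $f_*(N+P')$ with $P'$ a general member of $|P+\text{small ample}|_{\Q}$. Then $(X,\Delta+(1-t)D)$ is klt, because $N$ is controlled by pklt and $P'$ is general. Moreover
\[
K_X+\Delta+(1-t)D\sim_{\Q}-t(K_X+\Delta)=tL.
\]
Hence a $-(K_X+\Delta)$-MMP is exactly a $(K_X+\Delta+(1-t)D)$-MMP. The klt property must be checked on all steps. The point is that every step of an $L$-MMP is $L$-negative, hence $(K+\Delta+(1-t)D)$-negative, so discrepancies do not decrease and the pair stays klt.

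\textbf{Step 3: running the MMP with scaling.} Given an ample $H$, run the $(K_X+\Delta_t)$-MMP with scaling of $H$, where $\Delta_t=\Delta+(1-t)D$. Its existence (cone and contraction theorems, existence of flips) for klt pairs follows from \cite{BCHM}. We must also ensure that the sequence of extremal rays chosen is one for $L$ with scaling. The scaling thresholds $\lambda_i$ for $tL$ versus $L$ differ only by the factor $t$, so this is automatic. Since $D$ depends only on $f$ and $P$, and not on the step, a single choice of $t$ works throughout.

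\textbf{Step 4: dimension two.} When $\dim X=2$ the MMP has no flips, and each step drops the Picard number, so it terminates. If $L$ is pseudoeffective, termination yields a model on which $L$ is nef, i.e.\ an anticanonical minimal model. If $L$ is not pseudoeffective, the process ends in a Mori fibre space; the statement presumably implicitly assumes pseudoeffectivity, which holds since the Zariski decomposition exists.

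\textbf{Main obstacle.} The delicate step is Step 2: producing a single $D\in|L|_{\Q}$ with $(X,\Delta+(1-t)D)$ klt.

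1. Pklt gives klt for asymptotically general divisors, but $N$ is a fixed part. We need that the discrepancies of the divisors in $\Supp N$ satisfy $a(E,X,\Delta)-\mult_E N>-1$ after the $(1-t)$ scaling. I would try to get this from the definition of pklt via $\sigma_E(L)=\mult_E N$, using the Zariski property that $N=N_\sigma(f^*L)$.
2. If $L$ is not big, choosing $P'$ general requires semiampleness-type input. I would instead perturb by an ample $\epsilon A$ and absorb the perturbation into the scaling divisor.
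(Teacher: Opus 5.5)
There is a genuine gap, and it starts with a sign error in Step 2. If $D\sim_{\Q}L=-(K_X+\Delta)$, then
\[
K_X+\Delta+(1-t)D\sim_{\Q}(K_X+\Delta)-(1-t)(K_X+\Delta)=t(K_X+\Delta)=-tL,
\]
not $tL$. Your first formulation, $K_X+B_\epsilon\sim_{\Q}\delta(K_X+\Delta)+(1-\delta)\epsilon f_*A_W$, already shows this. So a $(K_X+\Delta+(1-t)D)$-MMP is a $(K_X+\Delta)$-MMP, and your klt-preservation argument runs backwards: $L$-negative steps are $(K_X+\Delta+(1-t)D)$-positive, so discrepancies can drop.

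To make the log canonical divisor a positive multiple of $L$ you need a coefficient larger than one, $K_X+\Delta+(1+t)D\sim_{\Q}tL$. You must therefore show that $(X,\Delta+(1+t)D)$ is klt, i.e.\ that $A_{X,\Delta}(E)>(1+t)\sigma_E(L)$ for all prime divisors $E$ over $X$ with one fixed $t>0$. This amounts to $\lct_\sigma(X,\Delta,L)>1$, which is exactly what the paper's proof establishes before handing off to \cite{CJKL25}. Your ``main obstacle'' discussion only checks the $(1-t)$ version, which follows immediately from pklt and is not what is needed.

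The passage from pklt, $A_{X,\Delta}(E)-\sigma_E(L)>\varepsilon$, to $A_{X,\Delta}(E)\ge(1+t)\sigma_E(L)$ is not formal. The ratio $\sigma_E(L)/A_{X,\Delta}(E)$ could tend to $1$ along divisors of large log discrepancy, and this is where the birational Zariski decomposition is really used. The paper takes a log resolution $f\colon Y\to X$ with $f^*L=P+N$, $P$ nef, and gets $A_{X,\Delta}(E_i)\ge(1+\varepsilon)\sigma_{E_i}(L)$ for every prime divisor $E_i$ on $Y$ (via \cite{CJL25}). It then propagates this to all divisors on iterated blow-ups of $Y$. Because $P$ is nef, $N_\sigma$ pulls back, so $\sigma_E(L)$ is additive over the $E_i$ containing the center of $E$, while $A_{X,\Delta}$ is superadditive. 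Finally \cite[Lemma 1.60]{Xu25} gives $\lct_\sigma\ge 1+\varepsilon$.

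In your framework, the missing step would go as follows.
\begin{enumerate}
\item Arrange that $\Delta_Y+N$, where $K_Y+\Delta_Y=f^*(K_X+\Delta)$, has snc support.
\item Pklt forces all coefficients of $\Delta_Y+N$ to be $<1$, so $(Y,\Delta_Y+(1+t)N)$ is sub-klt for small $t$.
\item Nefness of $P$ gives $\sigma_E(L)=\mult_E(g^*N)$ for every $g\colon Z\to Y$ and every prime divisor $E\subset Z$, which yields the uniform bound.
\end{enumerate}

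A secondary issue: $f_*(N+P')\sim_{\Q}L+\epsilon f_*A_W$, not $L$. Take $A_W$ with $f_*A_W$ equal to the scaling ample. When $L$ is not big, a step of the $L$-MMP with scaling, with threshold $\lambda$, is negative for $K_X+\Delta+(1+t)D$ only while $\lambda>(1+t)\epsilon/t$. So $\epsilon$, and with it $D$, must be re-chosen as the thresholds decrease, contrary to your claim that one $D$ works throughout.
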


The second contribution concerns the structure of the anticanonical
MMP at the level of $\mathbb{Q}$-factorial terminal models.
Given a partial $-(K_X+\Delta)$-MMP starting from a pklt pair containing no flips, it is natural to ask whether each step can be realized, after passing to
terminal models, by maps that are negative with respect to the
log canonical divisor upstairs. We prove that this is indeed the case and that the potential log discrepancy is preserved throughout the program.

\begin{theorem} \label{thm:main1}
Let $(X,\Delta)$ be a pklt pair, and let $\varphi\colon (X,\Delta)\to (X',\Delta')$ be a partial $-(K_X+\Delta)$-MMP that does not contain any flips. Then there exist $\Q$-factorial terminal models $p\colon (Y,\Delta_Y\coloneqq p^{-1}_*\Delta)\to (X,\Delta)$ and $q\colon (Y',\Delta_{Y'}\coloneqq q^{-1}_*\Delta')\to (X',\Delta')$, and a composition of $(K_Y+\Delta_Y)$-negative maps $\psi\colon (Y,\Delta_Y)\dashrightarrow (Y',\Delta_{Y'})$. Moreover, for every prime divisor $E$ over $X$,
$$ \overline{a}(E,X,\Delta)=\overline{a}(E,Y,\Delta_Y)=\overline{a}(E,X',\Delta')=\overline{a}(E,Y',\Delta_{Y'}).$$
\end{theorem}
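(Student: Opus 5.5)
We argue by induction on the number of steps, so the main work is a single divisorial contraction or a single Mori fiber–type step. Since the MMP contains no flips and $\varphi$ is written as a morphism, each step is a $-(K_X+\Delta)$-negative divisorial contraction $f\colon X\to X_1$ contracting a prime divisor $F$. (If the paper's partial MMP allows small contractions only as parts of flips, these are excluded by hypothesis.) We first recall the definition of potential log discrepancy. Writing $-(K_X+\Delta)$ via a birational Zariski decomposition, or via $\overline{a}(E,X,\Delta)=a(E,X,\Delta)-\sigma_E(-(K_X+\Delta))$ with the asymptotic order of vanishing $\sigma_E$, we have $\overline{a}(E,X,\Delta)>0$ for all $E$ by the pklt hypothesis.

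\textbf{Step 1: invariance of $\overline{a}$ under one step.} For a $-(K_X+\Delta)$-negative divisorial contraction $f$, we have $K_X+\Delta=f^*(K_{X_1}+\Delta_1)-cF$ with $c>0$. Hence $-(K_X+\Delta)=f^*(-(K_{X_1}+\Delta_1))+cF$, and $F$ lies in the fixed part, so $\sigma_E(-(K_X+\Delta))=\sigma_E(-(K_{X_1}+\Delta_1))+c\,\mult_E F$. On the other side, $a(E,X,\Delta)=a(E,X_1,\Delta_1)+c\,\mult_E F$. Subtracting gives $\overline{a}(E,X,\Delta)=\overline{a}(E,X_1,\Delta_1)$. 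Iterating yields $\overline{a}(E,X,\Delta)=\overline{a}(E,X',\Delta')$, and in particular $(X',\Delta')$ is again pklt.

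\textbf{Step 2: terminal models and the upstairs comparison.} A $\Q$-factorial terminal model $p\colon Y\to X$ extracts exactly the divisors $E$ with $a(E,X,\Delta)\le 0$. Here we use the relative MMP over $X$ on a log resolution, as in BCHM, with $\Delta_Y=p^{-1}_*\Delta$ and the extracted divisors carrying no boundary. We claim that $\overline{a}(E,Y,\Delta_Y)=\overline{a}(E,X,\Delta)$. Write $K_Y+\Delta_Y=p^*(K_X+\Delta)+\sum a_iE_i$ with $a_i=a(E_i,X,\Delta)\le 0$, so $-(K_Y+\Delta_Y)=p^*(-(K_X+\Delta))+\sum(-a_i)E_i$. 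The divisor $\sum(-a_i)E_i$ is $p$-exceptional and effective, hence contained in the fixed part of the pullback plus itself. Therefore $\sigma_E$ shifts by exactly $\sum(-a_i)\mult_E E_i$, just as the discrepancy does, and the identity follows as in Step 1. The same argument applies to $q\colon Y'\to X'$. This establishes the chain of equalities.

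\textbf{Step 3: constructing $\psi$.} This is the main obstacle. Consider the divisors on $Y$ versus those on $Y'$. Any divisor extracted by $q$ has $a(E,X',\Delta')\le 0$, and since $a(E,X,\Delta)\le a(E,X',\Delta')$ for an anticanonically negative step, it is already on $Y$. Moreover $F$ itself, and any divisor on $Y$ not on $Y'$, has $a(\cdot,X',\Delta')>0$. Hence $Y'$ is obtained from $Y$ by contracting the set of divisors $\mathcal{S}$ with $a(E,X,\Delta)\le 0<a(E,X',\Delta')$, together with $F$ if it is not already one of these. We then run a $(K_Y+\Delta_Y)$-MMP over $X'$ on $Y$. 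The key computation is that over $X'$ we have $K_Y+\Delta_Y\equiv_{X'}\sum_{E\in\mathcal S\cup\{F\}} a(E,X',\Delta')E$, with all coefficients positive, on divisors exceptional over $Y'$ and over $X'$. By the negativity lemma, an MMP with scaling over $X'$ for this exceptional effective divisor terminates by contracting exactly these divisors, using BCHM since $(Y,\Delta_Y)$ is klt, having terminal singularities. Each step is $(K_Y+\Delta_Y)$-negative, and the output is a $\Q$-factorial terminal model of $(X',\Delta')$ extracting exactly the divisors of $Y'$. We replace $Y'$ by this output, which the statement's existential form permits. Composing over the steps, after re-choosing $Y$ at the first stage only, gives $\psi$. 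The delicate points are the inequality $a(E,X,\Delta)\le a(E,X',\Delta')$, which follows from the expression in Step 1, and checking that the terminality of the output holds, which follows because the discrepancies of $\mathcal S$ over $X'$ are positive.
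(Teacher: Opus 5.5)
The overall plan matches the paper's: invariance of $\overline{a}$ (your Steps 1 and 2), then a $(K_Y+\Delta_Y)$-MMP over the next model, iterated. Steps 1 and 2 are correct. Step 2 is in fact a more direct proof of Lemma \ref{pklt MMP0} than the paper's, since it rests only on $N_\sigma(p^*D+G)=N_\sigma(p^*D)+G$ for effective $p$-exceptional $G$.

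\textbf{The gap is in Step 3, whose key inequality points the wrong way.} Your own Step 1 identity $A_{X,\Delta}(E)=A_{X_1,\Delta_1}(E)+c\,\mult_E F$ with $c>0$ gives $a(E,X,\Delta)\ge a(E,X_1,\Delta_1)$: an anticanonical step makes singularities worse, not better. Consequently your set $\mathcal S$ is empty, $a(F,X_1,\Delta_1)=-\mult_F\Delta-c<0$, and the correct relative formula is
\[
K_Y+\Delta_Y\equiv_{X_1}-c\,F_Y+\sum_i a(E_i,X_1,\Delta_1)\,E_i ,
\]
with every coefficient $\le 0$.

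So the principle that an MMP over the base contracts exactly the support of an effective exceptional divisor is not available. Your description of $Y'$ as $Y$ with $\mathcal S\cup\{F\}$ contracted is false. In Example \ref{ex:1}, contract a single $(-3)$-curve $\widetilde L_1$. Then $Y=X$, and the minimal resolution of $X_1$ is $X$ again. Moreover, $\widetilde L_1$ can never be contracted by a $K_Y$-negative step, since $K_X\cdot\widetilde L_1=1$.

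Your claim that a $\Q$-factorial terminal model extracts exactly the divisors with $a\le 0$ also fails for the paper's definition, which imposes relative nefness. For the singularity $\frac1{15}(1,4)$, the minimal resolution (a chain of two $(-4)$-curves, each with discrepancy $-\frac23$) is such a model. Yet blowing up the node gives a divisor with discrepancy $-\frac13$. So the terminality argument (``the discrepancies of $\mathcal S$ over $X'$ are positive'') has nothing to stand on.

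\textbf{The repair is to drop the divisor bookkeeping entirely, as the paper does following \cite{CHP}.} Run a $(K_Y+\Delta_Y)$-MMP with scaling over $X_1$; it terminates (BCHM) because $(Y,\Delta_Y)$ is klt and $Y\to X_1$ is birational. Its steps may include flips upstairs, which is permitted since $\psi$ need only be a composition of $(K_Y+\Delta_Y)$-negative maps. These steps preserve $\Q$-factoriality, and terminality is inherited from $(Y,\Delta_Y)$ as in \cite{CHP}. The output is nef over $X_1$. The paper's definition of a $\Q$-factorial terminal model asks only for (a) and (b) and prescribes no set of extracted divisors. Hence this output is already a $\Q$-factorial terminal model of $(X_1,\Delta_1)$, whatever divisors it happens to extract, and you then iterate.
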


Theorem \ref{thm:main1} may be viewed as a generalization of the factorization theorem for anticanonical maps of Fano type varieties \cite{CHP} to the pklt setting. Note that, in proving the equalities for the potential log discrepancy, we need Lemma \ref{pklt MMP0}, and we need to use the theory of quasi-monomial valuations as in \cite{CJKL25}.

As a consequence of Theorem \ref{thm:main1}, we show that the
minimal resolutions of successive steps of the MMP on a pklt surface
are connected by a sequence of redundant blow-ups
(see Definition \ref{def:redun} and Corollary \ref{cor}),
and we characterize normal projective klt surfaces with nef anticanonical
divisor whose minimal resolution has no redundant exceptional curve
(Theorem \ref{thm:redun}).

\begin{corollary}\label{cor}
Let $(S,\Delta)$ be a pklt pair, where $S$ is a normal projective surface. Let $\varphi\colon (S,\Delta)\dashrightarrow (S',\Delta')$ be a $-(K_S+\Delta)$-MMP, and let $p\colon Y\to S$ and $q\colon Y'\to S'$ be the minimal resolutions. Then there exists a commutative diagram
\[
\begin{tikzcd}
(Y,\Delta_Y) \arrow[r,"\psi"] \arrow[d,"p"] & (Y',\Delta_{Y'}) \arrow[d,"q"] \\
(S,\Delta) \arrow[r,dashed,"\phi"] & (S',\Delta')
\end{tikzcd}
\]
Moreover, $\psi\colon (Y,\Delta_Y)\to (Y',\Delta_{Y'})$ is a sequence of redundant blow-ups.
\end{corollary}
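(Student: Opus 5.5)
We plan to deduce the corollary from Theorem \ref{thm:main1}, using the special features of surfaces. For surfaces the $-(K_S+\Delta)$-MMP contains no flips: every step is a divisorial contraction or a fibration, and the birational part $\varphi$ is a composition of birational morphisms. Theorem \ref{thm:main1} then applies. On a surface, a $\Q$-factorial terminal model of $(S,\Delta)$ of the form $p\colon (Y,p^{-1}_*\Delta)\to (S,\Delta)$ is smooth. We will show it can be taken to be the minimal resolution. Smoothness holds because terminal surface singularities are smooth. To get minimality, we would check from the construction in Theorem \ref{thm:main1} that $Y$ is obtained by extracting exactly the divisors with $\overline{a}(E,S,\Delta)\le 1$, compared in the appropriate sense. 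If this is not directly visible, we instead argue as follows. Any smooth $Y$ dominating $S$ factors through the minimal resolution. A $(-1)$-curve contracted over $S$ has discrepancy $\ge 1$ with respect to $K$, so it cannot appear in a terminal model where the boundary is only the strict transform. Hence $Y$ and $Y'$ are the minimal resolutions.

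Next we analyze $\psi$. It is a composition of $(K_Y+\Delta_Y)$-negative maps between smooth surfaces, so each step is a $(K_Y+\Delta_Y)$-negative divisorial contraction of a curve $C$. Running the steps on smooth surfaces, each contracted curve $C$ satisfies $C^2<0$ and $(K+\Delta_Y)\cdot C<0$. We claim $C$ is a $(-1)$-curve, so that each step is the blow-down of a point. If $C$ is not contained in $\Supp\Delta_Y$, then $K\cdot C<0$ with $C^2<0$ forces $C$ to be a $(-1)$-curve by adjunction. If $C\subset\Supp\Delta_Y$, the coefficient of $C$ is less than $1$ by pklt, and adjunction again gives $K\cdot C<0$. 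Since all intermediate surfaces are smooth, being divisorial contractions of curves that turn out to be $(-1)$-curves, $\psi$ is a morphism. Commutativity $q\circ\psi=\varphi\circ p$ follows because both sides agree on a dense open set and $\psi$ is a morphism, so $\varphi\circ p$ extends.

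It remains to show that each blow-up is \emph{redundant} in the sense of Definition \ref{def:redun}. We expect this to be the main obstacle, since it depends on the precise definition. Presumably a blow-up is redundant when the exceptional $(-1)$-curve $E$ has log discrepancy condition $a(E,Y',\Delta_{Y'})\le 1$, or equivalently when $E$ is a non-minimal curve in the resolution. For this we use the equalities of potential log discrepancies in Theorem \ref{thm:main1}:
\[
\overline{a}(E,S,\Delta)=\overline{a}(E,Y,\Delta_Y)=\overline{a}(E,S',\Delta')=\overline{a}(E,Y',\Delta_{Y'}).
\]
The curve $E$ contracted by $\psi$ is a divisor on $Y$, so $\overline{a}(E,Y,\Delta_Y)=1-\operatorname{coeff}_E\Delta_Y$. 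Combining this with the corresponding value computed on $Y'$, we obtain the numerical condition for redundancy. We would then verify that the center of each successive blow-up lies where the definition requires, for instance on $\Supp\Delta$ or at an intersection point of curves. This should follow from the negativity $(K+\Delta)\cdot E<0$ together with the blow-up formula $K_Y+\Delta_Y=\psi^*(K_{Y'}+\Delta_{Y'})+(a-1)E$, suitably adapted.
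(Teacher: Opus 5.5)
\textbf{There is a genuine gap in your last paragraph}, and that paragraph is where the content of the corollary lies.

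\emph{What redundancy means.} By Definition \ref{def:redun}, the blow-up of a point $u$ on a smooth pair $(T,\Delta_T)$ is redundant if and only if $\mult_u(N+\Delta_T)\ge 1$. Here $N$ is the negative part of the Zariski decomposition of $-(K_T+\Delta_T)$. Your guessed criterion uses only the log discrepancy and misses $N$ entirely.

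\emph{The formula you use is wrong.} The identity $\overline{a}(E,Y,\Delta_Y)=1-\mathrm{coeff}_E\Delta_Y$ drops the term $\sigma_E(-(K_Y+\Delta_Y))$. Only the inequality $\overline{a}(E,Y,\Delta_Y)\le 1$ holds. The $\sigma$-term computed on the \emph{target} surface is exactly what produces $N$.

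\emph{The missing computation.} The argument must be run one blow-up at a time. Factor $\psi$ as $Y=Y_0\to Y_1\to\cdots\to Y_m=Y'$, where $\psi_i\colon Y_i\to Y_{i+1}$ blows up $u_i$ with exceptional curve $E_i$, and write $-(K_{Y_{i+1}}+\Delta_{Y_{i+1}})=P_i+N_i$. Since $\psi_i^*P_i+\psi_i^*N_i$ is the Zariski decomposition of the pullback, $\sigma_{E_i}(-(K_{Y_{i+1}}+\Delta_{Y_{i+1}}))=\mult_{u_i}N_i$. Also $A_{Y_{i+1},\Delta_{Y_{i+1}}}(E_i)=2-\mult_{u_i}\Delta_{Y_{i+1}}$. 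Hence
\[
1\ \ge\ \overline{a}(E_i,Y_i,\Delta_{Y_i})=\overline{a}(E_i,Y_{i+1},\Delta_{Y_{i+1}})=2-\mult_{u_i}(N_i+\Delta_{Y_{i+1}}),
\]
which is precisely redundancy of $u_i$. No separate check on where the centers lie is needed.

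\emph{You need equality on intermediate surfaces.} The computation uses equality of $\overline{a}$ on \emph{consecutive intermediate} surfaces $Y_i,Y_{i+1}$. Comparing only $Y$ and $Y'$, as you propose, says nothing about a center $u_i$ with $i<m-1$, since that is a point of $Y_{i+1}$, not of $Y'$. The paper invokes this as preservation along the factorization, and it does hold:
\begin{enumerate}
\item[(a)] Define $D_0$ by $K_Y+\Delta_Y+D_0=(\varphi\circ p)^*(K_{S'}+\Delta')$. Then $D_0\ge 0$, by the negativity lemma together with the $-(K_S+\Delta)$-negativity of $\varphi$.
\item[(b)] Pushing forward gives $-(K_{Y_i}+\Delta_{Y_i})=r_i^*L+D_i$, where $r_i\colon Y_i\to S'$ is the induced morphism, $L=-(K_{S'}+\Delta')$, and $D_i\ge 0$ is the pushforward of $D_0$.
\item[(c)] Since $N_\sigma(r_i^*L+D_i)=N_\sigma(r_i^*L)+D_i$, every $\overline{a}(E,Y_i,\Delta_{Y_i})$ equals $\overline{a}(E,S',\Delta')$.
\end{enumerate}

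\textbf{Your second paragraph also contains a false step.} For $C\subset\Supp\Delta_Y$ with coefficient $c<1$, adjunction does not give $K_Y\cdot C<0$. If $C\cong\P^1$ with $C^2=-n$, then $(K_Y+cC)\cdot C=n-2-cn$, which is negative as soon as $n(1-c)<2$. Contracting such a $C$ creates a $\frac1n(1,1)$ point. So the intermediate surfaces of a $(K_Y+\Delta_Y)$-MMP need not be smooth, and its steps need not be blow-downs of $(-1)$-curves.

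Your reason for identifying the terminal model with the minimal resolution is also not right. What rules out a $p$-exceptional $(-1)$-curve $C$ is $p$-nefness of $K_Y+\Delta_Y$, and when $\Delta\neq 0$ this only excludes those with $\Delta_Y\cdot C<1$.

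Both problems disappear with a simpler route:
\begin{enumerate}
\item[(1)] Since $-(K_S+\Delta)$ is pseudoeffective, $\varphi$ is a composition of divisorial contractions, hence a morphism.
\item[(2)] So $\varphi\circ p$ is a resolution of $S'$ and factors through the minimal resolution $Y'$. This gives $\psi$ and $q\circ\psi=\varphi\circ p$ at once.
\item[(3)] As a birational morphism of smooth projective surfaces, $\psi$ is a composition of point blow-ups by Zariski's factorization.
\end{enumerate}
After this, only the step-by-step computation above is needed.
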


The following theorem is proved by adapting the argument of \cite[Theorem 1.2]{HP}.

\begin{theorem}\label{thm:redun}
Let $S$ be a normal projective klt surface with nef $-K_S$, and $g\colon S'\to S$ its minimal resolution. Then $S'$ has no redundant point if and only if either $S$ has at worst canonical singularities or the dual graph of the exceptional locus of $g$ is as follows.

\begin{center}
\begin{tikzpicture}[line cap=round,line join=round,x=1.0cm,y=1.0cm]
\clip(-4,2.8) rectangle (9,4.2);
\draw (-3.5,3.96)-- (-2.8,3.96);
\draw (-1.57,3.96)-- (-0.87,3.96);
\draw (-4,3.82) node[anchor=north west] {$-\underbrace{2\text{  }-2 \text{   }\text{   } -2}_{ \text{ } \text{ } \text{ } \alpha \text{ } (\alpha \geq 1)}$};
\draw (-1.32, 3.82) node[anchor=north west] {$-3$};
\draw (-2.5,4.12) node[anchor=north west] {$\cdots$};
\draw (0.2,3.96)-- (2.3,3.96);
\draw (-0.25,3.82) node[anchor=north west] {$-2$};
\draw (0.45,3.82) node[anchor=north west] {$-2$};
\draw (1.15,3.82) node[anchor=north west] {$-3$};
\draw (1.85,3.82) node[anchor=north west] {$-2$};
\draw (3.37,3.96)-- (4.77,3.96);
\draw (2.92,3.82) node[anchor=north west] {$-2$};
\draw (3.62,3.82) node[anchor=north west] {$-3$};
\draw (4.32,3.82) node[anchor=north west] {$-2$};
\draw (5.84,3.96)-- (6.54,3.96);
\draw (5.39,3.82) node[anchor=north west] {$-2$};
\draw (6.09,3.82) node[anchor=north west] {$-4$};
\draw (7.16,3.86) node[anchor=north west] {$-n$};
\draw (6.9,3.5) node[anchor=north west] {$(n \geq 3)$};
\begin{scriptsize}
\fill [color=black] (-3.5,3.96) circle (2.5pt);
\fill [color=black] (-2.8,3.96) circle (2.5pt);
\fill [color=black] (-1.57,3.96) circle (2.5pt);
\fill [color=black] (-0.87,3.96) circle (2.5pt);
\fill [color=black] (0.2,3.96) circle (2.5pt);
\fill [color=black] (0.9,3.96) circle (2.5pt);
\fill [color=black] (1.6,3.96) circle (2.5pt);
\fill [color=black] (2.3,3.96) circle (2.5pt);
\fill [color=black] (3.37,3.96) circle (2.5pt);
\fill [color=black] (4.07,3.96) circle (2.5pt);
\fill [color=black] (4.77,3.96) circle (2.5pt);
\fill [color=black] (5.84,3.96) circle (2.5pt);
\fill [color=black] (6.54,3.96) circle (2.5pt);
\fill [color=black] (7.61,3.96) circle (2.5pt);
\end{scriptsize}
\end{tikzpicture}
\end{center}

\end{theorem}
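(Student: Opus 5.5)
We would follow the strategy of \cite[Theorem 1.2]{HP} and reduce the statement to a discrepancy computation on the dual graph of $g$. Write $K_{S'}=g^*K_S+\sum_i a_iE_i$ with $-1<a_i\le 0$, since $S$ is klt and $g$ is minimal. We use the working characterization suggested by Definition \ref{def:redun}: a point $p\in S'$ is redundant exactly when the exceptional curve $E_p$ of the blow-up of $S'$ at $p$ satisfies $a(E_p,S)\le 0$, so that $E_p$ stays ``anticanonically invisible''. This characterization is an assumption to be checked against Definition \ref{def:redun}. For a general point $p\in E_i$ we get $a(E_p,S)=1+a_i>0$. For a point $p=E_i\cap E_j$ we get $a(E_p,S)=1+a_i+a_j$.

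So the first step is to show that $S'$ has no redundant point if and only if every pair of adjacent exceptional curves satisfies $a_i+a_j>-1$. For this we also have to rule out redundant points lying on the strict transform of a non-exceptional curve. This is where nefness of $-K_S$ enters. Take a $(-1)$-curve, or a curve $C$ with $-K_S\cdot C$ small, passing through the singular points. Nefness gives $-K_{S'}\cdot \tilde C=-K_S\cdot C+\sum a_i E_i\cdot\tilde C\ge \sum a_iE_i\cdot \tilde C$. This inequality bounds how deep the relevant points can sit, exactly as in \cite{HP}, and it shows that such points do not produce discrepancy $\le 0$. The "if" direction is then a direct computation. If $S$ is canonical, all $a_i=0$. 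For the listed graphs we solve $K_{S'}\cdot E_j=-2-E_j^2$ for the $a_i$ and check adjacent sums. For example, $[2,4]$ gives $a=(-2/7,-4/7)$, and $[2^{\alpha},3]$ gives $a_i=-i/(2\alpha+3)$; in both cases every adjacent sum is $>-1$. A single curve $[n]$ has no adjacent pairs.

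The main work is the "only if" direction, that is, classifying the graphs with all adjacent sums $>-1$. We would proceed in the following order.

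\begin{enumerate}
\item Rule out non-chain graphs (types $D$, $E$ with non-$(-2)$ curves, and the other log terminal branched graphs). At the central curve we have $a_c\cdot(\text{self-intersection})$ balanced by three neighbours. Combining this with monotonicity of discrepancies along twigs forces some neighbour pair to have sum $\le -1$, unless all curves are $(-2)$-curves, which is the canonical case.
\item For chains $[b_1,\dots,b_r]$, use that $-a_i$ is concave along the chain, with kinks only at curves with $b_i\ge3$. Adding a curve of self-intersection $\le -3$, or making $b_i\ge5$ inside a chain of length $\ge2$, pushes the maximal adjacent pair past $-1$.
\item Enumerate the finitely many shapes that survive, namely one non-$(-2)$ curve with $b\le4$ and $(-2)$-tails. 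Check them by the continued-fraction formula $a_i=-1+(\mu_i+\nu_i)/n$, where $\mu_i,\nu_i$ are the numerators from the two ends. This should leave exactly $[n]$, $[2^\alpha,3]$, $[2,3,2]$, $[2,2,3,2]$ and $[2,4]$.
\item Use nefness of $-K_S$ together with \cite{HP} to exclude the case where $S$ has several non-canonical singular points whose combined contributions would create a redundant point on a curve through them. This is what makes the statement about the whole exceptional locus of $g$ a single graph.
\end{enumerate}

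We expect step (3) together with step (4) to be the main obstacle. The local chain bookkeeping is mechanical but error-prone. The global exclusion needs the right test curve: we would first try a $(-1)$-curve on $S'$ meeting the exceptional locus, whose existence would be deduced from nefness of $-K_S$ via \cite{HP}, and bound $\sum(-a_i)E_i\cdot\tilde C\le1$.
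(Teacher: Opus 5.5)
Your core reduction and the local computations in steps (1)--(3) are sound. The convexity of $A_i=1+a_i$ (i.e.\ $A_{i-1}+A_{i+1}=b_iA_i$ with boundary values $1$) is the right tool, and your values for $[2,4]$ and $[2^\alpha,3]$ are correct. This local classification is what the paper's one-line appeal to \cite[Theorem 1.2]{HP} amounts to.

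However, the step you leave as an ``assumption to be checked'' is exactly where the hypothesis that $-K_S$ is nef is used. Because you miss this, you misplace nefness and plan a step that cannot work. Since $g^*(-K_S)$ is nef with $g^*(-K_S)\cdot E_i=0$, and $\sum_i(-a_i)E_i\ge 0$ (by minimality) has negative definite support,
\[
-K_{S'}=g^*(-K_S)+\sum_i(-a_i)E_i
\]
is the Zariski decomposition. So $N=\sum_i(-a_i)E_i$ and $\mult_pN=1-a(E_p,S)$, which is your characterization. As $N$ is supported on the snc divisor $\mathrm{Exc}(g)$ with coefficients in $[0,1)$, $\mult_pN$ equals $0$ off $\mathrm{Exc}(g)$, equals $-a_i<1$ at a point of a single $E_i$, and equals $-a_i-a_j$ at a node $E_i\cap E_j$. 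Two meeting exceptional curves always lie over the same singular point. Hence there are no redundant points to rule out on strict transforms of non-exceptional curves, and different singular points can never combine to produce a redundant point. Your test-curve argument is superfluous, and its formula also has a sign error: $-K_{S'}\cdot\tilde C=-K_S\cdot C-\sum_ia_iE_i\cdot\tilde C$.

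Step (4) is therefore not merely unnecessary: it aims at a false statement. The theorem has to be read singularity by singularity: every singular point of $S$ is either canonical or has one of the displayed resolution graphs. It does not say that $\mathrm{Exc}(g)$ is a single displayed graph. The surface $Y$ of Example~\ref{ex:1} refutes your reading. It has nine $\frac13(1,1)$ points and $-3K_Y\sim0$, so $-K_Y$ is nef. Its minimal resolution $X$ has $N=\frac13\sum_i\widetilde L_i$ with $\mult_pN\le\frac13$ everywhere, so $X$ has no redundant point, yet the exceptional locus consists of nine disjoint $(-3)$-curves.

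The fix is to state the Zariski decomposition explicitly and drop both the global discussion and step (4). Then apply your local classification to each connected component of $\mathrm{Exc}(g)$ separately.
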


Corollary \ref{cor} and Theorem \ref{thm:redun} are inspired by the results on redundant blow-up in \cite{HP}, which are extended here beyond the setting of rational surfaces with big anticanonical divisor.

The rest of this paper is organized as follows.
In Section \ref{sect:prelim}, we recall the necessary background on
valuations, the relative Nakayama asymptotic order, and potentially
klt pairs. Section \ref{sect:main} is devoted to the proofs of the main results.
Finally, in Section \ref{sect:exa}, we present examples illustrating two different phenomena. Example \ref{ex:1} shows that an anticanonical minimal model of a potentially klt surface need not be a Mori dream space. Examples \ref{ex:2}--\ref{ex:4} show that potentially klt varieties need not be of Calabi--Yau type.

\section{Preliminaries}\label{sect:prelim}

We collect the notions used throughout the paper:

\begin{itemize}
    \item A \emph{variety} is a separated, finite type, and integral scheme over an algebraically closed field $k$ of characteristic $0$.
    \item A \emph{couple} $(X,\Delta)$ is a normal variety $X$ with an effective $\Q$-Weil divisor $\Delta$ on $X$. A couple $(X,\Delta)$ is said to be a \emph{pair} if $K_X+\Delta$ is $\Q$-Cartier.
    \item Let $(X,\Delta)$ be a pair, and let $f\colon Y\to X$ be a proper birational morphism with $Y$ normal. Let $E$ be a prime divisor on $Y$. We denote the \emph{log discrepancy} by
    $$ A_{X,\Delta}(E)\coloneqq \mathrm{mult}_E(K_Y-f^*(K_X+\Delta))+1. $$
    Note that the notion does not depend on the choice of $f$.
\end{itemize}

\subsection{Valuations}

Let us recall the notion of quasi-monomial valuations. For more details, see \cite{JM12,Xu25}.

\smallskip

Throughout this section, let $X$ be a normal variety over an algebraically closed field $k$ of characteristic $0$, and write
$K \coloneqq  k(X)$ for its function field. We work with \emph{(real) valuations}
$\nu\colon K^\times \to \mathbb{R}$ that are trivial on $k^\times$, and extend by
$\nu(0)\coloneqq +\infty$.

\smallskip

A (real) valuation on $K$ is a map $\nu\colon K^\times\to\mathbb{R}$ such that:
\begin{itemize}
  \item $\nu(a)=0$ for all $a\in k^\times$;
  \item $\nu(fg)=\nu(f)+\nu(g)$ for all $f,g\in K^\times$;
  \item $\nu(f+g)\ge \min\{\nu(f),\nu(g)\}$ for all $f,g\in K$.
\end{itemize}
The associated valuation ring is
$$
\mathcal{O}_\nu \coloneqq  \{f\in K \mid \nu(f)\ge 0\}, \qquad
\mathfrak{m}_\nu \coloneqq  \{f\in K \mid \nu(f)>0\}.
$$

We say that $\nu$ is \emph{centered on $X$} (or \emph{a valuation over $X$}) if there exists an affine open
$U=\Spec R \subseteq X$ such that $R\subseteq \mathcal{O}_\nu$.
In this case, the \emph{center} of $\nu$ on $X$ is the (not necessarily closed) point $c_X(\nu) \in X \text{ corresponding to the prime ideal } R\cap \mathfrak{m}_\nu \subseteq R$. We write $\Val_X \coloneqq  \{ \nu \text{ valuation over } X\}, \Val_{X,x} \coloneqq  \{ \nu\in \Val_X \mid c_X(\nu)=x\}$ and $\Val^*_X \coloneqq  \Val_X\setminus\{0\}$.

Let $f\colon Y\to X$ be a proper birational morphism with $Y$ normal, and  $E\subset Y$ a prime divisor.
Then $\ord_E\colon K^\times\to \mathbb{Z}\subset\mathbb{R}$ is a valuation over $X$.
Any valuation of the form $c\cdot \ord_E$ for $c>0$ is called \emph{divisorial}.

\smallskip

A \emph{log-smooth model} of $X$ is a proper birational morphism
$f\colon (X',E)\to X$ such that $X'$ is smooth and $E=\bigcup_{i=1}^r E_i$ is a reduced simple normal crossings divisor on $X'$, with the property that $f$ is an isomorphism over $X\setminus f(E)$.

Fix a log-smooth model $f\colon (X',E=\bigcup_{i=1}^r E_i)\to X$.
Assume $\bigcap_{i=1}^r E_i\neq\varnothing$, and let $C$ be a connected component of $\bigcap_{i=1}^r E_i$ with generic point $\eta$.
Since $X'$ is smooth and $E$ is snc, the local ring $\mathcal{O}_{X',\eta}$ is a regular local ring of dimension $r$; we may choose regular parameters $z_1,\dots,z_r\in \mathcal{O}_{X',\eta}$ such that $E_i=(z_i=0)$ for all $i$.

Let $\alpha=(\alpha_1,\dots,\alpha_r)\in \R_{\ge 0}^r$.
For $g\in \mathcal{O}_{X',\eta}$, write its expansion in the completed local ring $\widehat{\mathcal{O}}_{X',\eta}\simeq k(\eta)[[z_1,\dots,z_r]]$ as
$$
g=\sum_{\beta\in \mathbb{Z}_{\ge 0}^r} c_\beta z^\beta,
\text{ where } z^\beta\coloneqq z_1^{\beta_1}\cdots z_r^{\beta_r}.
$$
Define
$$
\nu_{(X',E),\eta,\alpha}(g)
\coloneqq \min\left\{\sum_{i=1}^r \alpha_i\beta_i \ \middle|\ c_\beta\neq 0 \right\}\in \mathbb{R}_{\ge 0}\cup\{+\infty\}.
$$
This gives a valuation on $\mathcal{O}_{X',\eta}$, hence it extends uniquely to a valuation on $K$.
The resulting valuation is independent of the choice of the parameters $z_i$ as above.
Valuations obtained in this way are called \emph{quasi-monomial valuations}.

We denote by $\mathrm{QM}(X',E)$ the set of quasi-monomial valuations defined from $(X',E)$,
and by $\mathrm{QM}_\eta(X',E)\subseteq \mathrm{QM}(X',E)$ those with center $c_Y(\nu)=\eta$.

\smallskip
Let $\nu\in \Val_X$ and let $\mathfrak{a}\subset \mathcal{O}_X$ be an ideal sheaf.
Set $x\coloneqq c_X(\nu)$ and define
$$
\nu(\mathfrak{a}) \coloneqq  \min\{\nu(f)\mid f\in \mathfrak{a}_x\}.
$$
Then for ideals $\mathfrak{a},\mathfrak{b}$ in $\mathcal{O}_X$, one has $\nu(\mathfrak{a}\mathfrak{b})=\nu(\mathfrak{a})+\nu(\mathfrak{b})$.

\smallskip

The usual topology on $\Val_X$ is the weakest topology such that, for every ideal $\mathfrak{a}$ in $\mathcal{O}_X$,
the function $\nu\mapsto \nu(\mathfrak{a})$ is continuous.

\smallskip

Given a log-smooth model $(X',E=\bigcup_{i=1}^r E_i)\to X$, there is a natural ``retraction'' map
$$
\rho_{(X',E)}\colon \Val_X \longrightarrow \mathrm{QM}(X',E),
\,
\rho_{(X',E)}(\nu)\coloneqq \nu_{(X',E),\,c_{X'}(\nu),\,(\nu(E_1),\dots,\nu(E_r))},
$$
which records the values of $\nu$ along the components of $E$ into quasi-monomial data on $(X',E)$.

Assume for the moment that $(X,\Delta)$ is a klt pair.
For a prime divisor $F$ over $X$, the log discrepancy $A_{X,\Delta}(F)$ is defined in the usual way.
For quasi-monomial valuations on a log-smooth model, the discrepancy is linear in the weights.

Let $f\colon (X',E=\bigcup_{i=1}^r E_i)\to X$ be a log-smooth model, let $\eta$ be the generic point of a stratum
$\bigcap_{i=1}^r E_i$, and let $\alpha\in \R_{\ge 0}^r$.
Define
$$
A_{X,\Delta}\left(\nu_{(X',E),\eta,\alpha}\right)
\coloneqq \sum_{i=1}^r \alpha_i\, A_{X,\Delta}(E_i).
$$
This is independent of the chosen log-smooth model representing the given quasi-monomial valuation (cf. \cite[Lemma 3.6]{JM12} for the compatibility of quasi-monomial representations).

\smallskip

For a general valuation $\nu\in \Val_X$, define
$$
A_{X,\Delta}(\nu)
\coloneqq \sup_{(X',E)\ \text{log-smooth over }X} 
A_{X,\Delta}\left(\rho_{(X',E)}(\nu)\right)\in [0,+\infty].
$$

\smallskip

Let $\Phi\subseteq \Z_{\ge 0}$ be a subsemigroup, and let $\mathfrak{a}_\bullet=\{\mathfrak{a}_m\}_{m\in\Phi}$
be a \emph{graded sequence of ideals}, i.e., $\mathfrak{a}_m\mathfrak{a}_n\subseteq \mathfrak{a}_{m+n}$.
For $\nu\in \Val_X$, set
$$
\nu(\mathfrak{a}_\bullet)\coloneqq \inf_{m\in \Phi}\frac{\nu(\mathfrak{a}_m)}{m}.
$$
In characteristic $0$, one defines the log canonical threshold by
$$
\lct(X,\Delta;\mathfrak{a}_\bullet)
\coloneqq \inf_{\nu\in \Val_X^\ast}\frac{A_{X,\Delta}(\nu)}{\nu(\mathfrak{a}_\bullet)}.
$$
A key fact is that the infimum is achieved by a quasi-monomial valuation:
there exists a quasi-monomial valuation $\nu\in \Val^*_X$ such that
$$
\lct(X,\Delta;\mathfrak{a}_\bullet)=\frac{A_{X,\Delta}(\nu)}{\nu(\mathfrak{a}_\bullet)}.
$$
(cf. \cite[Theorem 1.1]{Xu20})

\subsection{Relative Nakayama's asymptotic order}

Let $f\colon X\to S$ be a projective morphism from a normal variety to a variety.

\smallskip

We recall the relative Nakayama's asymptotic order, following \cite[Section 3]{LX25}.

\begin{definition}[{Relative asymptotic order \(\sigma_E\); cf.\ \cite[Section 3]{LX25}}]\label{def:sigma}
Let $f\colon X\to S$ be as above, let $A$ be an ample/$S$ $\R$-divisor on $X$, and let $D$ be a pseudoeffective/$S$ $\R$-Cartier $\R$-divisor on $X$.
Fix a proper birational morphism $g\colon X'\to X$, and let $E$ be a prime divisor on $X'$.

\begin{itemize}
  \item If $D$ is a big/$S$ $\R$-divisor on $X$, define
  $$
    \sigma_E(X/S,D)\coloneqq \inf\Bigl\{\mult_E(D') \ \Big|\ 0\le D' \sim_{\R,S} g^*D \Bigr\}.
  $$
  \item For pseudoeffective/$S$ $\R$-divisor $D$, define
  $$
    \sigma_E(X/S,D)\coloneqq \lim_{\varepsilon\to 0^+} \sigma_E(X/S, D+\varepsilon A),
  $$
  allowing $+\infty$ as a limit.
\end{itemize}
It is known that $\sigma_E(X/S,D)$ is well-defined and independent of the choice of the ample/$S$ divisor $A$.
\end{definition}

We denote by $\mathfrak{b}(X/S,|L|)$ the relative base ideal of the linear system $|L|$ over $S$.

\begin{definition}
    Let $f\colon X\to S$ be as above, $D$ a pseudoeffective/S $\mathbb Q$-Cartier $\Q$-divisor on $X$, and $\nu\in \mathrm{Val}_X$.
If $D$ is big over $S$ and $nD$ is Cartier for some $n>0$, define
\[
\sigma_\nu(X/S,D)\coloneqq \frac{1}{n}\inf_{m\ge 1}\frac{\nu(\mathfrak b(X/S,|mnD|))}{m}.
\]
If $D$ is only pseudoeffective over $S$, choose an ample/$S$ divisor $A$ on $X$
and define
\[
\sigma_\nu(X/S,D)\coloneqq \lim_{\varepsilon\to 0^+}\sigma_\nu(X/S,D+\varepsilon A).
\]
Note that this limit exists and is independent of the choice of $A$.
\end{definition}

\begin{remark}
If $\nu=c\cdot \ord_E$ for a prime divisor $E$ over $X$ and $c>0$, then
\[
\sigma_\nu(X/S,D)=c\,\sigma_{\ord_E}(X/S,D).
\]
\end{remark}

\begin{definition}
Let $f\colon X\to S$ be as above, let $(X,\Delta)$ be a klt pair, and let
$D$ be a pseudoeffective/$S$ $\Q$-Cartier $\Q$-divisor on $X$.
We define
\[
\lct_\sigma(X/S,\Delta,D)\coloneqq 
\inf_{\nu\in \mathrm{Val}_X^*}\frac{A_{X,\Delta}(\nu)}{\sigma_\nu(X/S,D)},
\]
with the convention that the quotient is $+\infty$ when $\sigma_\nu(X/S,D)=0$.
\end{definition}

If the base $S$ is just a point, then we simply write $\sigma_E(D)\coloneqq \sigma_E(X/S,D)$, $\sigma_\nu(D)\coloneqq \sigma_\nu(X/S,D)$ and $\lct_\sigma(X,\Delta,D)\coloneqq \lct_\sigma(X/S,\Delta,D)$.

The \textit{negative part} $N_{\sigma}(D)$ of $D$ is defined as
\begin{align*}
  N_{\sigma}(D)\coloneqq \sum_E \sigma_E(D)E,
\end{align*}
and the \textit{positive part} $P_{\sigma}(D)$ of $D$ is defined as $P_{\sigma}(D)\coloneqq D-N_{\sigma}(D)$. We call $D=P_{\sigma}(D)+N_{\sigma}(D)$ the \textit{divisorial Zariski decomposition} of $D$. We call it the \textit{Zariski decomposition} if $P_{\sigma}(D)$ is nef. Moreover, if there exists a projective birational morphism $f\colon Y\rightarrow X$ such that $P_{\sigma}(f^{\ast}D)$ is nef, then we say that $D$ admits a \textit{birational Zariski decomposition}.

\begin{definition}
Let $f\colon X\to Z$ be a projective morphism. We say that $X$ is of
\emph{Fano type over $Z$} if there exists an effective $\Q$-divisor $\Delta$
on $X$ such that $(X,\Delta)$ is klt and $-(K_X+\Delta)$ is ample over $Z$.
\end{definition}

\begin{definition}
A normal projective variety $X$ is said to be of \emph{Calabi--Yau type}
if there exists an effective $\Q$-divisor $\Delta$ such that $(X,\Delta)$ is log canonical and $K_X+\Delta\sim_{\Q} 0$.
\end{definition}

\begin{lemma}
Let $(X,\Delta)$ be a klt pair and $f\colon X\to Z$ be a morphism. Suppose that $-(K_X+\Delta)$ is big over $Z$. If $\lct_{\sigma}(X/Z,\Delta,-(K_X+\Delta))>1$, then $X$ is of Fano type over $Z$.
\end{lemma}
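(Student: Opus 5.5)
We argue directly: find $0\le \Gamma\sim_{\Q,Z} -(K_X+\Delta)$ with $(X,\Delta+\Gamma)$ klt. Then $(X,\Delta+(1-\delta)\Gamma)$ is a klt pair with $-(K_X+\Delta+(1-\delta)\Gamma)\sim_{\Q,Z}\delta\,(-(K_X+\Delta))$ big over $Z$. Since $\Gamma$ can be chosen to contain the negative part, a further perturbation by a small multiple of an ample plus effective divisor will make it ample over $Z$. More precisely, write $D\coloneqq -(K_X+\Delta)$. Because $D$ is big over $Z$, we write $D\sim_{\Q,Z} A+B$ with $A$ ample over $Z$ and $B\ge 0$. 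For $0<\delta\ll 1$ we then take the boundary $\Delta'\coloneqq \Delta+(1-\delta)\Gamma+\delta B$, which gives
\[
-(K_X+\Delta')\sim_{\Q,Z}\delta A,
\]
and this is ample over $Z$. Klt-ness of $(X,\Delta')$ for small $\delta$ follows once $(X,\Delta+\Gamma)$ is klt, since $\Delta'$ is a small perturbation by an effective divisor. So everything reduces to producing $\Gamma$.

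To construct $\Gamma$, set $c\coloneqq \lct_\sigma(X/Z,\Delta,D)>1$ and pick $1<t<c$. The plan is to use the graded base ideals $\mathfrak a_m\coloneqq\mathfrak b(X/Z,|mnD|)$ together with the valuative identity
\[
\sigma_\nu(X/Z,D)=\frac1n\,\nu(\mathfrak a_\bullet),
\]
which holds for big $D$. The hypothesis then reads $\lct(X,\Delta;\mathfrak a_\bullet^{1/n})=c>t$, i.e.\ $\lct(X,\Delta;\mathfrak a_\bullet)>t/n$ in the valuative sense of the preliminaries. We then invoke the standard fact that for graded sequences
\[
\lct(X,\Delta;\mathfrak a_\bullet)=\lim_m m\cdot\lct(X,\Delta;\mathfrak a_m)=\sup_m m\cdot\lct(X,\Delta;\mathfrak a_m).
\]
This gives some $m$ with $\lct(X,\Delta;\mathfrak a_m)>t/(mn)$, so $(X,\Delta+\tfrac{t}{mn}\mathfrak a_m)$ is klt. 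A general member $G\in|mnD|$ over $Z$ (after restricting to the affine pieces of $Z$, or twisting by pullback of an ample on $Z$ so that relative base ideals are realized globally) then satisfies that $(X,\Delta+\tfrac{t}{mn}G)$ is klt, by Bertini-type genericity for log resolutions of $\mathfrak a_m$. Setting $\Gamma\coloneqq \tfrac1{mn}G\sim_{\Q,Z}D$, the pair $(X,\Delta+t\Gamma)$ is klt with $t>1$, hence $(X,\Delta+\Gamma)$ is klt.

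The main obstacle is the limit statement $\lct(\mathfrak a_\bullet)=\sup_m m\,\lct(\mathfrak a_m)$ in the pair setting, together with the relative global-generation issue. For the former, we intend to use Jonsson--Musta\c{t}\u{a}: for a divisor $E$ computing (or approximating) the lct of $\mathfrak a_m$, the inequality $\ord_E(\mathfrak a_\bullet)\le \ord_E(\mathfrak a_m)/m$ gives one direction, and superadditivity $\mathfrak a_m^k\subseteq \mathfrak a_{mk}$ gives monotonicity. The nontrivial direction uses that a klt pair has $A_{X,\Delta}$ bounded below linearly, which lets us pass to a finite log resolution. If the relative base locus causes difficulty, we work locally over an affine cover of $Z$: the Fano type condition is local over $Z$ up to replacing $A$ by a relatively ample divisor. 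The general member chosen over each piece glues after adding the pullback of a sufficiently ample divisor from $Z$.
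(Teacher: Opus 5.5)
Your proposal is correct and follows the paper's route. You use the same graded sequence of relative base ideals $\mathfrak b(X/Z,|mnD|)$ and the identity $\sigma_\nu=\frac1n\nu(\mathfrak a_\bullet)$ to extract a $\Gamma\sim_{\Q,Z}-(K_X+\Delta)$ with $(X,\Delta+\Gamma)$ klt; the paper packages your limit formula $\lct(\mathfrak a_\bullet)=\sup_m m\,\lct(\mathfrak a_m)$ and general-member step as triviality of the asymptotic multiplier ideal $\mathcal J(X,\Delta,\mathfrak b_\bullet^{1/n})$ (citing Lemma~1.60 of Xu's book), and it leaves implicit the perturbation $\Delta+(1-\delta)\Gamma+\delta B$ that you write out.
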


\begin{proof}
Let $n$ be a positive integer such that $n(K_X+\Delta)$ is Cartier and $f_*\mathcal{O}_X(-n(K_X+\Delta))\ne 0$. Define $\mathfrak{b}_{m}\coloneqq \mathfrak{b}(X/Z,|-mn(K_X+\Delta)|)$.
Then $\mathfrak{b}_{\bullet}\coloneqq \{\mathfrak{b}_m\}_{m\ge 1}$ is a graded sequence of ideals in $\mathcal{O}_X$, and $\mathcal{J}(X,\Delta,\mathfrak{b}^{\frac{1}{n}}_{\bullet})=\mathcal{O}_X$ if and only if $\lct_{\sigma}(X/Z,\Delta,-(K_X+\Delta))>1$ by \cite[Lemma 1.60]{Xu25}. On the other hand, $\mathcal{J}(X,\Delta,\mathfrak{b}^{\frac{1}{n}}_{\bullet})=\mathcal{O}_X$ if and only if there exists an effective $\Q$-divisor $\Delta'\sim_{\Q,Z} -(K_X+\Delta)$ such that $(X,\Delta+\Delta')$ is klt. Hence, $(X,\Delta)$ is of Fano type over $Z$.
\end{proof}

\subsection{Potentially klt pairs}

We now recall the notion of potentially klt pairs.

\begin{definition}
    Let $(X,\Delta)$ be a pair such that $D\coloneqq -(K_X+\Delta)$ is pseudoeffective.
For a prime divisor $E$ over $X$, define the \emph{potential log discrepancy} by
\[ 
\bar a(E;X,\Delta)\coloneqq A_{X,\Delta}(E)-\sigma_E(D),
\]
where $A_{X,\Delta}(E)$ is the \emph{log discrepancy} of $E$ with respect to $(X,\Delta)$.
We say that $(X,\Delta)$ is \emph{potentially klt} (or \emph{pklt}) if $\inf_{E}\bar a(E;X,\Delta)>0$, where the $\inf$ is taken over all prime divisors $E$ over $X$.
\end{definition}

Let us define the notion of a $\Q$-factorial terminal model.

\begin{definition}
Let $(X,\Delta)$ be a klt pair, and let $f\colon (Y,\Delta_Y)\to (X,\Delta)$ be a proper birational morphism with $\Delta_Y=f^{-1}_*\Delta$. We say $f$ is a \emph{$\Q$-factorial terminal model} if
\begin{itemize}
    \item[(a)] $(Y,\Delta_Y)$ is a \(\Q\)-factorial terminal pair, and
    \item[(b)] $K_Y+\Delta_Y$ is nef over $X$.
\end{itemize}
\end{definition}

\begin{definition}
Let $X$ be a normal projective variety and $D$ a $\Q$-Cartier divisor on $X$. A birational contraction $\varphi\colon X\dashrightarrow X'$
is called \emph{$D$-nonpositive} if $\varphi_{\ast}D$ is a $\Q$-Cartier divisor on a normal projective variety $X'$, and if there exists a common resolution
\[
\xymatrix{
& W \ar[dl]_p \ar[dr]^q & \\
X \ar@{-->}[rr]^{\varphi} && X'
}
\]
such that $p^*(D)=q^*(\varphi_{\ast}D)+E$, where $E\geq 0$ is a $q$-exceptional divisor. If moreover $\Supp(E)$ contains all the strict transforms of the $\varphi$-exceptional divisors, then we say that $\varphi$ is \emph{$D$-negative}.
\end{definition}

\begin{definition}
A birational contraction $\varphi\colon (X,\Delta)\dashrightarrow (Y,\Delta_Y)$ is called an \emph{$-(K_X+\Delta)$-minimal model} of $(X,\Delta)$ if $\varphi$ is $-(K_X+\Delta)$-negative and $-(K_Y+\Delta_Y)$ is nef. By a \emph{partial $-(K_X+\Delta)$-MMP}, we mean a finite sequence of steps in a $-(K_X+\Delta)$-MMP.
\end{definition}

\begin{lemma} \label{pklt MMP}
Let $(X,\Delta)$ be a pklt pair and $\varphi\colon X\to Z$ a birational contraction. Then $(X,\Delta)$ is of Fano type over $Z$. In particular, we can run a $-(K_X+\Delta)$-MMP with scaling of an ample divisor over $Z$, and it terminates.
\end{lemma}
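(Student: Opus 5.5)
We reduce to the unnumbered lemma just before the subsection on potentially klt pairs: for a klt pair with $-(K_X+\Delta)$ big over $Z$, the inequality $\lct_\sigma(X/Z,\Delta,-(K_X+\Delta))>1$ implies Fano type over $Z$. So the plan is to check three things: $(X,\Delta)$ is klt, $-(K_X+\Delta)$ is big over $Z$, and the relative $\lct_\sigma$ exceeds $1$. The MMP statement then follows from the standard fact that a Fano type pair over $Z$ (after a small $\Q$-factorialization if needed) is a Mori dream space over $Z$. By \cite{BCHM}, any $D$-MMP with scaling of an ample divisor over $Z$ terminates, in particular for $D=-(K_X+\Delta)$.

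\emph{Klt and bigness.} Since $\sigma_E(-(K_X+\Delta))\ge 0$, pklt gives $A_{X,\Delta}(E)\ge \bar a(E;X,\Delta)\ge \epsilon>0$ for all $E$, so $(X,\Delta)$ is klt. Bigness over $Z$ is automatic because $\varphi$ is birational: every divisor is big over $Z$. For this we use that the generic fiber is a point, so some multiple $mD+A$ with $A$ relatively ample has sections; more simply, $-(K_X+\Delta)+\varphi^*H$ is big for $H$ sufficiently ample on $Z$.

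\emph{Comparison of $\sigma$'s.} This is the key step: for every valuation $\nu$,
\[
\sigma_\nu(X/Z,D)\le \sigma_\nu(D).
\]
For divisorial $\nu=c\,\ord_E$ this holds because the relative infimum is over the larger set of $\Q$-linear equivalences over $Z$, which allows adding pullbacks from $Z$. Equivalently, $\mathfrak b(X/Z,|mL|)\supseteq \mathfrak b(X,|mL|)$, and for $D$ only pseudoeffective we also add $\varepsilon A$ with $A$ ample, which is ample over $Z$. For general $\nu$ the same ideal containment gives the inequality directly from the definition. Granting this,
\[
\frac{A_{X,\Delta}(\nu)}{\sigma_\nu(X/Z,D)}\ \ge\ \frac{A_{X,\Delta}(\nu)}{\sigma_\nu(D)}.
\]
It therefore suffices to show $\inf_\nu A_{X,\Delta}(\nu)/\sigma_\nu(D)>1$ on $X$ itself.

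\emph{Main obstacle: from divisors to all valuations.} Pklt only gives $A(E)-\sigma_E(D)\ge\epsilon$ for divisorial $E$. We must upgrade this to $A(\nu)\ge(1+\delta)\sigma_\nu(D)$ for all $\nu$, and we must also pass from the global $\sigma$ to the relative one over $Z$. Since $D$ is only pseudoeffective, we instead work with the relative base ideals of the big divisor $D$ over $Z$ and its graded sequence $\mathfrak b_\bullet$. By \cite[Theorem 1.1]{Xu20}, $\lct(X,\Delta;\mathfrak b_\bullet^{1/n})$ is computed by a quasi-monomial valuation $\nu$, and quasi-monomial valuations are limits of rescaled divisorial valuations $c_k\ord_{E_k}$ in a fixed simplicial cone. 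On such a cone $A$ is linear and $\nu\mapsto\nu(\mathfrak b_\bullet)$ is concave and continuous. Hence it suffices to show $A(E)\ge(1+\delta)\sigma_E(X/Z,D)$ uniformly for divisorial $E$, and this infimum is then attained in the limit.

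For the divisorial bound, we use $A(E)\ge\sigma_E(D)+\epsilon$ together with $\sigma_E(X/Z,D)\le\sigma_E(D)$. Write $A\ge\sigma+\epsilon$. If $\sigma_E(D)$ is bounded by a constant $M\cdot A(E)$ (since $\sigma_E(D)\le \ord_E$ of a fixed effective divisor in $|mD+A|$, which is at most $C\cdot A(E)$ by klt-ness), then
\[
A\ \ge\ \sigma+\frac{\epsilon}{C'}\,A
\]
after normalizing by homogeneity. The subtle point is this homogeneity: $\epsilon$ is an absolute constant while $A$ and $\sigma$ scale. I would handle it by noting that the pklt condition is invariant under the rescaling used and that $\bar a$ is bounded below along the cone. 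If that fails, I would instead perturb: replace $\Delta$ by $\Delta+tG$ with $G\in|D+\varepsilon A|$ general, and use the pklt gap to keep the pair klt, producing the required $\Delta'$ directly.
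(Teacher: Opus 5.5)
\textbf{There is a genuine gap at the central step.} Your preliminary steps are fine. These are klt-ness, bigness over $Z$, the comparison $\sigma_\nu(X/Z,D)\le\sigma_\nu(D)$, and the use of \cite{Xu20} for the graded sequence of relative base ideals, which is legitimate because $D$ is big over $Z$. The final passage from Fano type to a terminating MMP also matches the paper, which cites \cite[Theorem 5.6]{Oht22}.

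What is missing is the heart of the lemma. You never pass from the additive pklt gap $A_{X,\Delta}(E)-\sigma_E(X/Z,D)\ge\epsilon$, which holds for integral divisorial valuations $\ord_E$, to $\lct_\sigma(X/Z,\Delta,D)>1$.

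\begin{itemize}
\item Your reduction to a uniform bound $A_{X,\Delta}(E)\ge(1+\delta)\,\sigma_E(X/Z,D)$ over all divisorial $E$ is circular. Since the lct of a graded sequence can be computed on divisorial valuations, that bound is equivalent to $\lct_\sigma(X/Z,\Delta,D)\ge 1+\delta$.
\item Your route to that bound fails. The additive gap only gives $A_{X,\Delta}(E)/\sigma_E(X/Z,D)\ge 1+\epsilon/\sigma_E(X/Z,D)$, and $\sigma_E$ is in general unbounded as $E$ varies. The estimate $\sigma_E\le C\,A_{X,\Delta}(E)$ is compatible with this ratio tending to $1$; it would help only if $C<1$, which is the statement itself.
\item The pklt gap is not rescaling invariant. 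If $\ord_F=q\,\nu$ with $\nu$ in a normalized cone, then $\nu$ only has gap $\epsilon/q$. So ``$\bar a$ is bounded below along the cone'' is exactly what must be proved.
\end{itemize}

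Nothing in your argument excludes the real danger: a quasi-monomial $\nu_0$ of higher rational rank with $A_{X,\Delta}(\nu_0)=\sigma_{\nu_0}(X/Z,D)$, approximated by rescaled divisorial valuations that each satisfy the pklt gap.

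The perturbation fallback does not help either. As written, $K_X+\Delta+G\sim_{\Q}\varepsilon A$ has the wrong sign. A corrected version (a general $G\sim_{\Q,Z}D$ with $(X,\Delta+G)$ klt) is equivalent to $\mathcal{J}(X,\Delta,\mathfrak{b}_\bullet^{1/n})=\mathcal{O}_X$, i.e.\ to $\lct_\sigma(X/Z,\Delta,D)>1$ again.

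\textbf{The missing idea is a quantitative Diophantine approximation at the minimizer, which is how the paper argues.}
\begin{enumerate}
\item Take $\nu_0\in\mathrm{QM}_\eta(Y,E)$ computing $\lct_\sigma(X/Z,\Delta,D)$, and set $\phi(\nu)\coloneqq A_{X,\Delta}(\nu)-\sigma_\nu(X/Z,D)$. It is convex on the cone (linear minus concave), hence Lipschitz with some constant $C$ near $\nu_0$.
\item By \cite[Lemma 2.7]{LX18} there are divisorial $\nu_t$ in the cone with $q_t\nu_t=\ord_{F_t}$ and $\|\nu_0-\nu_t\|<t/q_t$.
\item Homogeneity gives $\phi(\nu_t)=\phi(\ord_{F_t})/q_t>\epsilon/q_t$. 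Hence $\phi(\nu_0)>(\epsilon-Ct)/q_t>0$ for small $t$.
\end{enumerate}
The essential point is that the approximation error is of order $t/q_t$. This is the same order in $q_t$ as the rescaled gap $\epsilon/q_t$, but with a small constant; mere convergence $\nu_t\to\nu_0$ would not suffice. The result $A_{X,\Delta}(\nu_0)>\sigma_{\nu_0}(X/Z,D)$ gives $\lct_\sigma>1$, and your remaining steps then go through.

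Your intermediate reduction to the absolute invariant $\inf_\nu A_{X,\Delta}(\nu)/\sigma_\nu(D)$ is also a detour to a harder problem, since there $D$ is only pseudoeffective and \cite{Xu20} does not directly apply. The comparison of $\sigma$'s is needed only on divisors, to transfer the pklt gap to the relative invariant.
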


\begin{proof}
Since $\varphi$ is birational, $-(K_X+\Delta)$ is big over $Z$. For any prime divisor $E$ over $X$, we have $\sigma_E(-(K_X+\Delta))\ge \sigma_E(X/Z,-(K_X+\Delta))$. Since $(X,\Delta)$ is pklt, there exists $\varepsilon>0$, independent of the choice of $E$, such that $A_{X,\Delta}(E)-\sigma_E(X/Z,-(K_X+\Delta))>\varepsilon$.

\smallskip

We use Diophantine approximation to prove $\lct_{\sigma}(X/Z,\Delta,-(K_X+\Delta))>1$. Let $\nu_0\in \Val_X$ be a quasi-monomial valuation computing $\lct_{\sigma}(X/Z,\Delta,-(K_X+\Delta))$ (See \cite[Theorem 1.1]{CJKL25}), $\eta$ the center of $\nu_0$, and let $(Y,E)\to X$ be a log-smooth model of $X$ such that $\nu_0\in \mathrm{QM}_{\eta}(Y,E)$.

\smallskip

Define a function $\phi\colon \mathrm{QM}_{\eta}(Y,E)\to \R$ by $\phi(\nu)\coloneqq A_{X,\Delta}(\nu)-\sigma_{\nu}(X/Z,-(K_X+\Delta))$ for $\nu\in \mathrm{QM}_{\eta}(Y,E)$. By the concavity of the function
\[
\nu\mapsto \sigma_\nu(X/Z,-(K_X+\Delta))
\]
on $\mathrm{QM}_\eta(Y,E)$, the function $\phi$ is convex on $\mathrm{QM}_{\eta}(Y,E)$. Since every convex function is locally Lipschitz on an open and convex subset of a Euclidean space, there exist positive real numbers $C,\delta$ such that $|\phi(\nu_0)-\phi(\nu)|<C\|\nu_0-\nu\|$ for all valuations $\nu\in \mathrm{QM}_{\eta}(Y,E)$ with $\|\nu_0-\nu\|\le \delta$. By \cite[Lemma 2.7]{LX18}, for each $t>0$, there exist a divisorial valuation $\nu_t\in \mathrm{QM}_{\eta}(Y,E)$, a prime divisor $F_t$ over $X$, and a positive rational number $q_t$ such that
\begin{itemize}
    \item $q_t\cdot \nu_t=\mathrm{ord}_{F_t}$, and
    \item $\|\nu_0-\nu_t\|<\frac{t}{q_t}$.
\end{itemize}

For a sufficiently small $t>0$, we have $\phi(\ord_{F_t})>\varepsilon$. Since $\ord_{F_t}=q_t\nu_t$, by homogeneity we obtain $\phi(\nu_t)=\frac{1}{q_t}\phi(\ord_{F_t})>\frac{\varepsilon}{q_t}$. Hence, we obtain the following inequalities
\[
\phi(\nu_0)\ge \phi(\nu_t)-|\phi(\nu_0)-\phi(\nu_t)|
> \frac{\varepsilon}{q_t}-C\|\nu_0-\nu_t\|
> \frac{\varepsilon-Ct}{q_t}>0
\]
for sufficiently small $t>0$.
Therefore, we have $A_{X,\Delta}(\nu_0)-\sigma_{\nu_0}(X/Z,-(K_X+\Delta))>0$. It follows that 
$$ \lct_{\sigma}(X/Z,\Delta,-(K_X+\Delta))=\frac{A_{X,\Delta}(\nu_0)}{\sigma_{\nu_0}(X/Z,-(K_X+\Delta))}>1.$$
Applying \cite[Lemma 1.60]{Xu25}, we conclude that $X$ is of Fano type over $Z$. The second assertion follows from \cite[Theorem 5.6]{Oht22}.
\end{proof}

The next lemma explains how the pklt condition behaves under a $\Q$-factorial terminal model.

\begin{lemma} \label{pklt MMP0}
Let $(X,\Delta)$ be a pklt pair and let $f\colon (Y,\Delta_Y)\to (X,\Delta)$ be a $\Q$-factorial terminal model. Then $(Y,\Delta_Y)$ is also a pklt pair, and the potential log discrepancy is preserved:
$$ \overline{a}(E,X,\Delta)=\overline{a}(E,Y,\Delta_Y),$$
\end{lemma}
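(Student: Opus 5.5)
We compare the two sides of the equality term by term, writing $K_Y+\Delta_Y=f^*(K_X+\Delta)+F$ with $F$ an $f$-exceptional divisor. A pklt pair is klt, since $\sigma_E\ge 0$ forces $A_{X,\Delta}(E)\ge \overline{a}(E,X,\Delta)$ to be bounded below by a positive constant. Hence a $\Q$-factorial terminal model exists.

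First I claim $F\le 0$, i.e.\ $-F\ge 0$. Indeed $-F$ is $f$-exceptional and $-F\equiv_X -(K_Y+\Delta_Y)$, which is anti-nef over $X$ by (b). The negativity lemma then gives $-F\ge 0$. Concretely, $-F=\sum_i (1-A_{X,\Delta}(E_i))E_i$ over the $f$-exceptional divisors $E_i$, and each coefficient is nonnegative.

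Next, for any prime divisor $E$ over $X$ (hence over $Y$), the log discrepancies are related by
$$A_{Y,\Delta_Y}(E)=A_{X,\Delta}(E)+\mult_E(-F),$$
where $\mult_E$ is taken after pulling $F$ back to a model on which $E$ lives. For the asymptotic orders I use
$$-(K_Y+\Delta_Y)=f^*(-(K_X+\Delta))+(-F),$$
in which $-F\ge 0$ is $f$-exceptional. The key step is to show
$$\sigma_E\bigl(f^*D+(-F)\bigr)=\sigma_E(f^*D)+\mult_E(-F)=\sigma_E(D)+\mult_E(-F)$$
for $D=-(K_X+\Delta)$. The inequality $\le$ is clear: adding the fixed effective divisor $-F$ to any member of $|f^*D+\varepsilon f^*A|_{\R}$ gives an admissible member, and $\sigma_E$ of a pullback agrees with $\sigma_E$ downstairs. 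For $\ge$, I use the standard fact that for an effective $f$-exceptional divisor $G$ one has $H^0(Y,m(f^*L)+mG)=H^0(X,mL)$. This holds because $f_*\mathcal O_Y(\lfloor m f^*L+mG\rfloor)=\mathcal O_X(mL)$ when $G$ is exceptional and effective. So every effective divisor $\R$-linearly equivalent to $f^*(D+\varepsilon A)-\varepsilon\,(\text{small perturbation})+(-F)$ is $f^*$ of something plus $-F$.

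To handle bigness and the perturbation, I would perturb with $f^*A$ for an ample $A$ on $X$ rather than an ample divisor on $Y$. This is permitted because $\sigma_E$ can be computed using any big-and-nef perturbation approaching $0$, or equivalently by taking $A_Y=f^*A-\delta H$ with $\delta\to 0$ and using continuity of $\sigma_E$ on the big cone. Establishing exactly this ``$\sigma_E(f^*D+G)=\sigma_E(f^*D)+\mult_E G$'' identity, uniformly in the limit $\varepsilon\to0$ in the pseudoeffective (not big) case, is the main obstacle. I will be careful that the limit defining $\sigma_E$ is independent of the chosen ample and that continuity can be used from the big side.

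Subtracting the two displayed formulas, the term $\mult_E(-F)$ cancels, giving $\overline{a}(E,Y,\Delta_Y)=\overline{a}(E,X,\Delta)$ for every $E$. Taking the infimum over all $E$ (the prime divisors over $Y$ and over $X$ coincide), we get $\inf_E\overline{a}(E,Y,\Delta_Y)=\inf_E\overline{a}(E,X,\Delta)>0$. Since $K_Y+\Delta_Y$ is $\Q$-Cartier by $\Q$-factoriality, $(Y,\Delta_Y)$ is a pklt pair.
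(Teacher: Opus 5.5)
Your proof is correct, and it takes a genuinely different route from the paper.

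The paper also starts from the effective exceptional divisor $-F=f^*(K_X+\Delta)-(K_Y+\Delta_Y)$, but then argues structurally:
\begin{enumerate}
\item $(Y,\Delta_Y-F)$ is crepant over $(X,\Delta)$, hence pklt, so $(Y,\Delta_Y)$ is pklt by \cite[Lemma 3.5]{CP16};
\item it runs a $-(K_Y+\Delta_Y)$-MMP $Y\dashrightarrow Y'$ over $X$ via Lemma \ref{pklt MMP}, whose proof uses the quasi-monomial valuation computing $\lct_\sigma$ from \cite{CJKL25};
\item the negativity lemma shows that the pushforward of $-F$ vanishes, so $Y'\to X$ is crepant;
\item it concludes by chaining the invariance of $\overline{a}$ under crepant morphisms and under anticanonical MMP steps.
\end{enumerate}

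You instead notice that $f$ itself already writes $-(K_Y+\Delta_Y)=f^*(-(K_X+\Delta))+G$ with $G\coloneqq -F$ effective and $f$-exceptional. You then check directly that $A$ and $\sigma_E$ both shift by $\mult_E G$. The identity $\sigma_E(f^*B+G)=\sigma_E(B)+\mult_E G$ is exactly what underlies the invariance of $\overline{a}$ under $-(K+\Delta)$-negative maps, which the paper uses as a black box. So your argument is the core computation with the detour removed. It needs no MMP, no \cite[Lemma 3.5]{CP16} and no quasi-monomial valuations. At least for this lemma, the valuative input mentioned in the introduction is therefore unnecessary. It also applies to any proper birational $f$ with $\Delta_Y=f^{-1}_*\Delta$ and $K_Y+\Delta_Y$ $\Q$-Cartier and $f$-nef; terminality and $\Q$-factoriality play no role. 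The paper's route additionally produces the crepant model $Y'$, which the lemma does not need.

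The step you flag as the main obstacle can be closed as follows. Put $L\coloneqq f^*(-(K_X+\Delta))+G$ and fix an ample $A_Y$ on $Y$. Subadditivity of $\sigma_E$, together with $\sigma_E=0$ on semiample divisors, gives $\sigma_E(L+\varepsilon(f^*A+A_Y))\le\sigma_E(L+\varepsilon f^*A)\le\sigma_E(L)$. The left side tends to $\sigma_E(L)$ because $f^*A+A_Y$ is ample, and this justifies perturbing by $f^*A$.

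For the inequality $\ge$, let $B$ be big on $X$, $M\ge 0$ and $M\sim_{\R}f^*B+G$. Then $M-f^*f_*M-G$ is $f$-exceptional and $\R$-linearly trivial, hence zero by the negativity lemma. So $M=f^*(f_*M)+G$, and no rounding is involved.
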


\begin{proof}
Let $D\coloneqq f^*(K_X+\Delta)-(K_Y+\Delta_Y)$. Since $f_*D=0$ and $D$ is anti $f$-nef, the negativity lemma (cf. \cite[Lemma 3.39]{KM98}) gives $D\ge 0$. We can write $f^*(K_X+\Delta)$ as
$$ f^*(K_X+\Delta)=K_Y+(\Delta_Y+D),$$
and hence $(Y,\Delta_Y+D)$ is pklt since the property ``potentially klt'' is invariant under any crepant morphism. Consequently, the pair \((Y,\Delta_Y)\) is pklt by \cite[Lemma 3.5]{CP16}.

\smallskip

By Lemma \ref{pklt MMP}, we can run a $-(K_Y+\Delta_Y)$-MMP $\varphi\colon Y\dashrightarrow Y'$ over $X$. Let $g\colon Y'\to X$ be the structure morphism, and let $\Delta_{Y'}\coloneqq \varphi_*\Delta_Y$ and $D'\coloneqq \varphi_*D$. Note that $D'$ is nef over $X$, and $g_*D'=0$. Thus, by the negativity lemma (cf. \cite[Lemma 3.39]{KM98}) again, $D'\le 0$. Hence, $D'=0$, and $g^*(K_X+\Delta)=K_{Y'}+\Delta_{Y'}$. Since the potential log discrepancy is preserved under any crepant morphism and under an anticanonical MMP,
$$ \overline{a}(E,X,\Delta)=\overline{a}(E,Y',\Delta_{Y'})=\overline{a}(E,Y,\Delta_Y),$$
and we proved the lemma.
\end{proof}

\subsection{Redundant blow-up}

We now introduce the notion of a \emph{redundant blow-up}. This notion was first defined in \cite[Definition 4.1]{Sak84} in the study of anticanonical models of rational surfaces with $\kappa(-K_S)=2$. Here, we extend it to a broader setting.

\begin{definition}\label{def:redun}
Let $(S,\Delta)$ be a smooth projective surface pair such that
$-(K_S+\Delta)$ is pseudoeffective, and let $-(K_S+\Delta)=P+N$ be the Zariski decomposition. We say that $p\in S$ is a \emph{redundant point} of $(S,\Delta)$ if $\mult_p(N+\Delta)\ge 1$. A blow-up at a redundant point is called a \emph{redundant blow-up}, and its exceptional divisor is called a \emph{redundant exceptional curve}. A smooth projective surface is said to \emph{have a redundant exceptional curve} if it contains the exceptional divisor of a redundant blow-up.
\end{definition}

The following lemma provides a Zariski decomposition based criterion for a point to be redundant. 

\begin{proposition}\label{prop:redun-equiv}
Let $f\colon \widetilde S\to S$ be a blow-up of a smooth point $p$ on a smooth projective surface pair $(S,\Delta)$, and let $\widetilde\Delta$ be the strict transform of $\Delta$. Write $-(K_S+\Delta)=P+N$ for the Zariski decomposition, and let $E$ be the $f$-exceptional curve.
Then the following are equivalent:
\begin{enumerate}
\item[\emph{(1)}] $p$ is a redundant point of $(S,\Delta)$,
\item[\emph{(2)}] $-(K_{\widetilde S}+\widetilde\Delta)
=f^*P+\bigl(f^*N+(\mult_p\Delta-1)E\bigr)$
is the Zariski decomposition of $-(K_{\widetilde S}+\widetilde\Delta)$,
\item[\emph{(3)}] $f^*N+(\mult_p\Delta-1)E$ is effective.
\end{enumerate}
\end{proposition}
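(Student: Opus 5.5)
We first compute the anticanonical class on the blow-up. Since $K_{\widetilde S}=f^*K_S+E$ and $\widetilde\Delta=f^*\Delta-(\mult_p\Delta)E$, we get
\[
-(K_{\widetilde S}+\widetilde\Delta)=f^*(-(K_S+\Delta))+(\mult_p\Delta-1)E=f^*P+\bigl(f^*N+(\mult_p\Delta-1)E\bigr).
\]
So the decomposition in (2) is always a valid expression of $-(K_{\widetilde S}+\widetilde\Delta)$. The question is only whether it is the Zariski decomposition.

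Next we note that $\mult_E(f^*N)=\mult_pN$ and that $f^*N$ has no other $E$-component. Hence $f^*N+(\mult_p\Delta-1)E$ is effective if and only if $\mult_pN+\mult_p\Delta-1\ge 0$. Since multiplicity at a point is additive on effective divisors, this is the condition $\mult_p(N+\Delta)\ge 1$. This gives (1)$\Leftrightarrow$(3) directly from Definition \ref{def:redun}.

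The implication (2)$\Rightarrow$(3) is immediate, because the negative part of a Zariski decomposition is effective.

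For (3)$\Rightarrow$(2), we verify the characterization of the Zariski decomposition on a smooth surface. We need $P'$ nef, $N'\ge0$, $P'\cdot C=0$ for every component $C$ of $N'$, and a negative definite intersection matrix on the components of $N'$. Here $P'=f^*P$ is nef because $P$ is. Effectivity of $N'$ is exactly (3). The components of $N'$ are the strict transforms $\widetilde N_i$ of the components of $N$, together possibly with $E$. For these, $f^*P\cdot\widetilde N_i=P\cdot f_*\widetilde N_i=P\cdot N_i=0$ and $f^*P\cdot E=0$.

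The main point is negative definiteness of the matrix on $\{\widetilde N_i\}\cup\{E\}$. We argue as follows. Let $V$ be the span of $f^*N_i$ and $E$. On this space the intersection form is the orthogonal sum of the form on $\mathrm{span}\{N_i\}$ (pulled back, since $f^*N_i\cdot E=0$) and $E^2=-1$. It is therefore negative definite. Each $\widetilde N_i=f^*N_i-(\mult_pN_i)E$ lies in $V$, and the classes $\widetilde N_i,E$ span $V$. So any subset of $\{\widetilde N_i,E\}$ has a negative definite intersection matrix.

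Uniqueness of the Zariski decomposition (Fujita's characterization) then shows that (2) holds. This also agrees with the $\sigma$-decomposition $P_\sigma,N_\sigma$ used earlier, since on smooth surfaces these coincide. The only care needed is the case where $E$ does not appear in $N'$, i.e.\ $\mult_p(N+\Delta)=1$. That case is covered by the subset statement.
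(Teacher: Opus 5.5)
Your proposal is correct and follows essentially the same route as the paper. Both use the formula $-(K_{\widetilde S}+\widetilde\Delta)=f^*P+f^*N+(\mult_p\Delta-1)E$, read off (1)$\Leftrightarrow$(3) from the coefficient $\mult_pN+\mult_p\Delta-1$ of $E$, and prove (3)$\Rightarrow$(2) by checking nefness, orthogonality and negative definiteness directly. Your change of basis from $\{f^*N_i,E\}$ to $\{\widetilde N_i,E\}$ is the same computation as the paper's identity $\bigl(\sum_i x_iC_i'+tE\bigr)^2=\sum_{i,j}x_ix_j(C_i\cdot C_j)-\bigl(\sum_i x_ir_i-t\bigr)^2$, and your remarks on the case $\mult_p(N+\Delta)=1$ and on $N_\sigma$ agreeing with Fujita's decomposition are welcome extra precision.
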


\begin{proof}
Since $f\colon \widetilde{S}\to S$ is a blow-up of a smooth point,
$$ -(K_{\widetilde{S}}+\widetilde\Delta)=-f^*(K_S+\Delta)+(\mathrm{mult}_p\Delta-1)E=f^*P+f^*N+(\mathrm{mult}_p\Delta-1)E.$$
Moreover, $f^*P$ is nef, and
$$ f^*N+(\mathrm{mult}_p\Delta-1)E=f^{-1}_*N+(\mathrm{mult}_p N+\mathrm{mult}_p\Delta-1)E,$$
and it is effective if and only if $f$ is a redundant blow-up.

\smallskip

It remains to verify the orthogonality condition and negative definiteness. Orthogonality follows from $P\cdot N=0$ and $f^*P\cdot E=0$. 

Let us prove that the support of $f^*N+(\mathrm{mult}_p\Delta-1)E$ is negative definite. To see this, write
$$ N=\sum_i a_iC_i. $$
Then, $f^*C_i=C'_i+r_iE$ for some $r_i$, where $C'_i$ is the strict transform. The intersection number is $C'_i\cdot C'_j=C_i\cdot C_j-r_ir_j$, $C'_i\cdot E=r_i$, and $E^2=-1$. Then,
$$\begin{aligned}
\left(\sum_i x_iC'_i+tE\right)^2&=\sum_{i,j}x_ix_j(C_i\cdot C_j-r_ir_j)+2t\sum_i x_ir_i-t^2 
\\ &=\sum_{i,j}x_ix_j(C_i\cdot C_j)-\left(\sum_i x_ir_i-t\right)^2.
\end{aligned}$$
Since the matrix $(C_i\cdot C_j)$ is negative definite, the first term is $<0$. This completes the proof.
\end{proof}

\section{Main results and Proofs}\label{sect:main}

In this section, we give proofs of our main theorems.

\begin{proof}[{Proof of Theorem \ref{CJKL}}]
It suffices to show 
\begin{equation} \label{lct}
    \mathrm{lct}_{\sigma}(X,\Delta,-(K_X+\Delta))>1.
\end{equation} 
Once (\ref{lct}) is established, the rest of the argument follows from \cite[Proof of Corollary 1.4]{CJKL25}.

\smallskip

Let $f\colon Y\to X$ be a log resolution of $(X,\Delta)$ such that $f^*(-(K_X+\Delta))$ admits a Zariski decomposition $f^*(-(K_X+\Delta))=P+N$. Then by \cite[Proof of Corollary 1.2]{CJL25}, for sufficiently small $\varepsilon>0$, we have $A_{X,\Delta}(E)-\sigma_E((1+\varepsilon)(-(K_X+\Delta)))\ge 0$.
Therefore, we obtain
$$
A_{X,\Delta}(E)-\sigma_E(-(K_X+\Delta))\ge \varepsilon \sigma_E(-(K_X+\Delta)) 
$$
for every prime divisor $E$ on $Y$. Let $g\colon Z\to Y$ be a composition of smooth blow-ups. Let us prove
\begin{equation} \label{asdf}
A_{X,\Delta}(E)-\sigma_E(-(K_X+\Delta))\ge \varepsilon \sigma_E(-(K_X+\Delta)) 
\end{equation}
for every prime divisor $E$ on $Z$. Let us assume that $g$ is a blow-up along a smooth subvariety in $Y$. We first compute $\sigma_E(-(K_X+\Delta))$.
$$ 
\begin{aligned}
\sigma_E(-(K_X+\Delta))&=\mathrm{mult}_EN_{\sigma}((f\circ g)^*(-(K_X+\Delta))) &
\\ &=\mathrm{mult}_Eg^*N_{\sigma}(f^*(-(K_X+\Delta))& (1)
\\ &=\sum_{g(E)\subseteq E_i}\mathrm{mult}_{E_i}N_{\sigma}(f^*(-(K_X+\Delta))&
\\ &=\sum_{g(E)\subseteq E_i}\sigma_{E_i}(-(K_X+\Delta)),&
\end{aligned}$$
where (1) is derived from \cite[Lemma 3.2.5]{Nak04}. 

Next we estimate \(A_{X,\Delta}(E)\). If we let $K_Y+\Delta_Y=f^*(K_X+\Delta)$,
then
$$
\begin{aligned}
A_{X,\Delta}(E)&=\mathrm{mult}_EK_Z-\mathrm{mult}_Eg^*(K_Y+\Delta_Y)+1
\\ &=\mathrm{mult}_E K_Z-\mathrm{mult}_Eg^*K_Y-\sum_{g(E)\subseteq E_i}\mathrm{mult}_E\Delta_Y+1
\\ &=\mathrm{mult}_Eg^*K_Y+A_Y(E)-\mathrm{mult}_Eg^*K_Y-\sum_{g(E)\subseteq E_i}\mathrm{mult}_E\Delta_Y
\\ &=A_Y(E)-\sum_{g(E)\subseteq E_i}\mathrm{mult}_{E_i}\Delta_Y
\\ &=A_Y(E)-\sum_{g(E)\subseteq E_i}1+\sum_{g(E)\subseteq E_i}A_{X,\Delta}(E_i)
\\ &\ge \sum_{g(E)\subseteq E_i}A_{X,\Delta}(E_i).
\end{aligned}
$$
Here, we used that the center of $E$ on $Y$ is contained in at most $A_Y(E)$ components of the simple normal crossings divisor supporting $\Delta_Y$.

Hence, we have the following inequalities
$$ 
\begin{aligned}
A_{X,\Delta}(E)-\sigma_E(-(K_X+\Delta))&\ge \sum_{g(E)\subseteq E_i}(A_{X,\Delta}(E_i)-\sigma_{E_i}(-(K_X+\Delta)))
\\ &\ge \varepsilon\sum_{g(E)\subseteq E_i}\sigma_{E_i}(-(K_X+\Delta))
\\ &=\varepsilon\sigma_E(-(K_X+\Delta)),
\end{aligned}$$
which establish (\ref{asdf}). The general case follows in the same way.

\smallskip

Thus, by \cite[Lemma 1.60]{Xu25}, the infimum in the definition of $\lct_\sigma(X,\Delta,-(K_X+\Delta))$ may be taken over divisorial valuations. Hence, by the above computations,
\[
\lct_\sigma(X,\Delta,-(K_X+\Delta))
=\inf_E\frac{A_{X,\Delta}(E)}{\sigma_E(-(K_X+\Delta))}
\ge 1+\varepsilon>1.
\]

\smallskip

If $\dim X=2$, then there are no flips in the MMP, and therefore every sequence of the MMP must terminate.
\end{proof}

\begin{proof}[{Proof of Theorem \ref{thm:main1}}]
Let
$$ (X_0,\Delta_0)\coloneqq (X,\Delta)\overset{\varphi_1}{\to} (X_1,\Delta_1)\overset{\varphi_2}{\to} \cdots \overset{\varphi_n}{\to} (X_n,\Delta_n)\coloneqq (X',\Delta')$$
be a finite sequence of partial $-(K_X+\Delta)$-MMP that does not contain any flips.

\smallskip

Let $(Y_0,\Delta'_{Y_0}\coloneqq p^{-1}_*\Delta_0)\to (X_0,\Delta_0)$ be a $\Q$-factorial terminal model (it exists by \cite[Lemma 1.33]{Kol13}). As in \cite[Proof of Corollary 4.2.5]{CHP}, we can run a $(K_{Y_0}+p^{-1}_*\Delta_0)$-MMP over $(X_1,\Delta_1)$. Repeating the argument gives $(Y',\Delta_{Y'})\to (X',\Delta')$, and
$$ \overline{a}(E,X,\Delta)=\overline{a}(E,Y,\Delta_Y)=\overline{a}(E,X',\Delta')=\overline{a}(E,Y',\Delta_{Y'})$$
for every prime divisor $E$ over $X$ (cf. Lemma \ref{pklt MMP0}).
\end{proof}

\begin{proof}[Proof of Corollary \ref{cor}]
Since there are no flips in dimension two, Theorem \ref{thm:main1} applies. The induced birational map $Y\dashrightarrow Y'$ factors as a sequence of blow-ups at smooth points. Hence, it is enough to show that each such blow-up is redundant.

Let $\psi_i\colon Y_i\to Y_{i+1}$ be one morphism in this factorization, with exceptional curve \(E_i\), and let $u_i\coloneqq \psi_i(E_i)\in Y_{i+1}$.
Let $-(K_{Y_{i+1}}+\Delta_{i+1})=P_i+N_i$ be the Zariski decomposition.

By the preservation of potential log discrepancies along upstairs factorization in Theorem \ref{thm:main1}, we obtain that
$$ 
\begin{aligned}
\overline{a}(E_i,Y_i,\Delta_i)&=1-\sigma_{E_i}(-(K_{Y_{i}}+\Delta_{Y_{i}}))
\\ &=(2-\mathrm{mult}_{u_i}\Delta_{i+1})-\sigma_{E_i}(-(K_{Y_{i+1}}+\Delta_{Y_{i+1}}))
\\ &=\overline{a}(E_i,Y_{i+1},\Delta_{Y_{i+1}}),
\end{aligned}$$
and hence $\mathrm{mult}_{u_i}N_i+\mathrm{mult}_{u_i}\Delta_{Y_{i+1}}\ge 1$. Thus, by the definition of redundant blow-up, we obtain that every $\psi_i$ is redundant, and hence, $\psi\colon (Y,\Delta_Y)\to (Y',\Delta_{Y'})$ is a sequence of redundant blow-ups.
\end{proof}

\begin{proof}[Proof of Theorem \ref{thm:redun}]
Let $g\colon S'\to S$ be the minimal resolution. Since every klt surface has rational singularities, the support of the exceptional locus of $g$ has simple normal crossings. We now follow the proof of \cite[Theorem 1.2]{HP}. The argument there uses only the simple normal crossings property of the exceptional locus and does not use the rationality assumption in the statement. Hence, the same proof applies verbatim in our setting.
\end{proof}

\section{Examples}\label{sect:exa}

\begin{example} [{cf. \cite[Theorem 4.1]{Z}}] \label{ex:1}
	Let $\zeta$ be a primitive cube root of unity, and let
\[
[1,\zeta^i,\zeta^j]\quad (i,j\in\{0,1,2\}),\qquad [1,0,0],\ [0,1,0],\ [0,0,1]
\]
be the $12$ points in $\mathbb P^2$.
Let $\pi\colon X\to \mathbb P^2$ be the blow-up of these points, $H\coloneqq \pi^*\mathcal O_{\mathbb P^2}(1)$, and $E_1,\dots,E_{12}$ the exceptional divisors.

There are $9$ lines $L_1,\dots,L_9$ in $\mathbb P^2$, each of which passes through exactly $4$ of the above $12$ points. Let $\widetilde L_i$ be the strict transform of $L_i$ on $X$. Then the curves $\widetilde L_1,\dots,\widetilde L_9$ are pairwise disjoint $(-3)$-curves, and $\sum_{i=1}^9 \widetilde L_i \sim 9H-3\sum_{j=1}^{12}E_j = -3K_X$. Hence, we have $-K_X \sim_{\mathbb Q} \frac13\sum_{i=1}^9 \widetilde L_i$. By contracting all the $\widetilde{L_i}$, we obtain a klt $-K_X$-minimal model $\varphi\colon X\rightarrow Y$, where $Y$ is a klt Calabi--Yau surface with nine singular points of type $\frac{1}{3}(1,1)$ and $\rho(Y)=4$. Furthermore, since $-3K_Y\sim 0$, $-K_Y$ is semiample.

Moreover, the nef cone $\mathrm{Nef}(Y)$ is circular, as explained in \cite[Example, p. 245]{Totaro2012}. Since $X$ is rational and $\varphi$ is birational, the surface $Y$ is also rational, which implies that $h^1(Y,\mathcal{O}_Y)=0$. Hence, by \cite[Corollary 5.1]{Totaro2012}, the Cox ring $\mathrm{Cox}(Y)$ is not finitely generated, and thus $Y$ is not a Mori dream space. Since $\varphi\colon X\rightarrow Y$ is a surjective morphism, $X$ is also not a Mori dream space by \cite[Theorem 1.1]{Okawa2016}.

In particular, this example shows that, even if a variety $X$ is not a Mori dream space, $X$ may admit an anticanonical minimal model.
\end{example}

\begin{example}\label{ex:2}
    Let $S$ be a smooth projective rational surface as in \cite[Example~4.8]{CP16}, coming from Nikulin's $(*** )$-surfaces \cite[p.~84]{Nik}.
Then we have $-K_S$ nef and $\kappa(-K_S)=0$, and for every $m>0$, $|-mK_S|=\{mD\}$ for a unique effective divisor $D=\sum_{i=1}^9 a_i C_i \in |-K_S|$. Moreover, $D$ has a component with coefficient $>1$; hence, $S$ is not of Calabi--Yau type.

Choose a member of this family whose affine Dynkin support is of type $\widetilde E_8$.
The affine marks are $(1,2,3,4,5,6,4,3,2)$. Choose adjacent components $C_r,C_s$ with $a_r=6, a_s=5$. Set $c\coloneqq a_r+a_s=11$.

Let $f \colon X\coloneqq \mathrm{Bl}_p S \to S$, where $p\coloneqq C_r\cap C_s$ and denote by $E$ the exceptional curve and by $C_i'$ the strict transform of $C_i$. Since $f^*D=\sum_{i=1}^9 a_i C_i' + cE$, we obtain 
\[
-K_X=f^*(-K_S)-E
     \sim D_X\coloneqq \sum_{i=1}^9 a_i C_i' + (c-1)E
     = \sum_{i=1}^9 a_i C_i' + 10E.
\]
Since $D\cdot C_i=0$ for every $i$, one has $(-K_X)\cdot C_r' = (-K_S)\cdot C_r - 1 = -1$. Similarly, one has $(-K_X)\cdot C_s' = (-K_S)\cdot C_s - 1 = -1$. These show that $-K_X$ is not nef.

We next show that $|-mK_X|=\{mD_X\}$ for $m>0$. Indeed, we have the following inclusion $H^0(X,-mK_X)=H^0\!\bigl(S,-mK_S\otimes I_p^m\bigr)\subseteq H^0(S,-mK_S)$, and the right-hand side is one-dimensional, generated by the section defining $mD$.
Since $mD$ vanishes at $p$ to order $mc\ge m$, it belongs to $H^0(S,-mK_S\otimes I_p^m)$, and hence the inclusion is an equality.

Therefore, any effective $\Delta\sim_{\mathbb Q}-K_X$ must satisfy $\Delta=D_X$: after clearing denominators, one has $m\Delta\in |-mK_X|=\{mD_X\}$. However, the coefficient of $E$ in $D_X$ is $10>1$, hence, $(X,D_X)$ is not log canonical. Thus, $X$ is not of Calabi--Yau type.

Moreover, we have the Zariski decomposition $-K_X=P+N$, where $P=\frac{10}{11}f^*D$ and $N=\frac{1}{11}\sum_{i=1}^9 a_i C_i'$. The intersection matrix $(C_i'\cdot C_j')_{i,j}$ is negative definite. Hence, by Artin's contraction criterion, contracting all the irreducible curves in $\Supp(N)$ gives a  birational morphism $g\colon X\to Y$. Since \(P\cdot C_i'=0\) for every component of \(\Supp(N)\), the nef divisor \(P\) descends to a nef divisor on \(Y\), which we denote by \(-K_Y\). Thus, $-K_X=g^*(-K_Y)+N$. Since all coefficients of \(N\) are \(<1\), the surface \(Y\) is klt. Therefore, \(Y\) is the \(-K_X\)-minimal model. Finally,
\cite[Corollary 3.12]{CP16} implies that \((X,0)\) is potentially klt.
\end{example}

\begin{example}\label{ex:3}

We construct an explicit example of a smooth projective rational surface that is potentially klt but not of Calabi--Yau type, whose $-K_X$-MMP is nontrivial and terminates at a klt surface.

Let $S$ be a non-fibered generalized Halphen surface in the irreducible additive
case (Add$_1$ in the terminology of \cite{Favre}), so that $(S,D)$ is a
generalized rational Okamoto--Painlev\'e pair with $D\in|-K_S|$ the unique
anticanonical divisor, where $D$ is an irreducible cuspidal cubic
(see \cite[\S 6]{SaitoTakebeTerajima} and \cite{Favre}).
Since $S$ is obtained by blowing up $\P^2$ at $9$ points, one has $K_S^2=0, D^2=(-K_S)^2=0$. Also, note that \(D\) is nef.

We first show that $h^0(S,\mathcal{O}_S(mD))=1$ for all $m>0$.
Setting $L\coloneqq \mathcal{O}_D(D)\in\Pic^0(D)$, the uniqueness of $D$ in $|-K_S|$ implies $L\not\simeq\mathcal{O}_D$. Since $D$ is a cuspidal cubic, $\Pic^0(D)\simeq\mathbb{G}_a$, which in characteristic zero has no nontrivial torsion, and hence, $L^{\otimes m}\not\simeq\mathcal{O}_D$ for any $m>0$. As $L^{\otimes m}$ has degree $0$ on the integral curve $D$, this forces $H^0(D,L^{\otimes m})=0$ for all $m>0$.
Applying the exact sequence
$$0\to\mathcal{O}_S((m-1)D)\to\mathcal{O}_S(mD)\to\mathcal{O}_D(mD)\to 0$$
and arguing by induction on $m$, we obtain $h^0(S,\mathcal{O}_S(mD))=h^0(S,\mathcal{O}_S)=1$ for any $m>0$, or equivalently, $|-mK_S|=\{mD\}$ for all $m>0$.

Now let $f\colon X\coloneqq \mathrm{Bl}_p S\to S$ be the blow-up at the cusp $p\in D$, with exceptional curve $E$ and strict transform $C$ of $D$.
Since $p$ is a cusp of multiplicity $2$, we have $f^*D=C+2E$, and hence $-K_X=f^*(-K_S)-E\sim C+E\eqqcolon D_X$. The unique section defining $mD$ vanishes at the cusp to order $2m\ge m$, and hence $|-mK_X|=\{mD_X\}$ for any $m>0$. The relevant intersection numbers are $C^2=D^2-4=-4, E^2=-1$ and $C\cdot E=2$.
Setting $P\coloneqq \tfrac{1}{2}f^*D=\tfrac{1}{2}C+E$ and $N\coloneqq \tfrac{1}{2}C$, we obtain that $P+N=C+E=-K_X$. Since $D$ is nef on $S$, the divisor $P$ is nef on $X$, and $P\cdot C=0$,
which implies that $-K_X=P+N$ is the Zariski decomposition.
In particular, we have $(-K_X)\cdot C=(C+E)\cdot C=-4+2=-2<0$, which implies that $-K_X$ is not nef.

We next show that $X$ is not of Calabi--Yau type. Any effective $\mathbb{Q}$-divisor $\Delta\sim_{\mathbb{Q}}-K_X$ must equal
$D_X=C+E$ by the uniqueness of $|-mK_X|$. Near the intersection point $q\coloneqq C\cap E$, choose analytic coordinates $(u,v)$
with $E=\{u=0\}$ and $C=\{u-v^2=0\}$, so that $D_X$ is locally defined by
$u(u-v^2)=0$, a tacnode singularity. For the divisorial valuation $F$ associated with the weighted blow-up of weights $(2,1)$, one computes $A_X(F)=3$ and $\ord_F\!\bigl(u(u-v^2)\bigr)=4$. Indeed, \(\ord_F(u)=2\), \(\ord_F(v)=1\), and hence $\ord_F(u-v^2)=2$. Therefore, $\ord_F\!\bigl(u(u-v^2)\bigr)=4$, which implies that $A_{X,D_X}(F)=A_X(F)-\ord_F(D_X)=3-4=-1<0$. Hence, $(X,D_X)$ is not log canonical, and $X$ is not of Calabi--Yau type.

Finally, since $C$ is a smooth rational curve with $C^2=-4$, it can be
contracted by a morphism $g\colon X\to Y$.
The relation $-K_X=P+\tfrac{1}{2}C$ then gives $g^*(-K_Y)=P$, or equivalently $K_X=g^*K_Y-\tfrac{1}{2}C$, which shows that $Y$ is klt and $-K_Y$ is nef. Thus, $g$ is the unique nontrivial step of the $-K_X$-MMP, and $Y$ is the $-K_X$-minimal model. By \cite[Corollary 3.12]{CP16}, the pair $(X,0)$ is potentially klt.
\end{example}

\begin{example}\label{ex:4}

We construct a threefold example using the previous example.

Let $X$ and $g\colon X\to Y$ be as above, and let $B$ be an elliptic curve.
Set $W\coloneqq X\times B, Z\coloneqq Y\times B$ and $G\coloneqq g\times\mathrm{id}_B\colon W\to Z$, and let $p_1\colon W\to X$ denote the first projection. Since $K_B\sim 0$, one has $K_W=p_1^*K_X$ and $K_Z=p_1^*K_Y$, and hence, we obtain that $-K_W=p_1^*(-K_X)$ and $-K_Z=p_1^*(-K_Y)$. Moreover, we have $-K_X=g^*(-K_Y)+\tfrac{1}{2}C$ and $-K_W=G^*(-K_Z)+\tfrac{1}{2}(C\times B)$. Since $-K_Y$ is nef, so is $-K_Z$.
On the other hand, for every $b\in B$, we have $(-K_W)\cdot(C\times\{b\})=(-K_X)\cdot C=-2<0$ which implies that $-K_W$ is not nef. Thus, $G$ is a nontrivial $-K_W$-negative birational contraction, and $Z$ is a $-K_W$-minimal model. Since $Y$ is klt and $B$ is smooth, the product $Z=Y\times B$ is klt, and \cite[Corollary 3.12]{CP16} then implies that $(W,0)$ is potentially klt.

It remains to show that $W$ is not of Calabi--Yau type.
By the K\"{u}nneth formula, we have 
$$H^0(W,-mK_W)=H^0(X,-mK_X)\otimes H^0(B,\mathcal{O}_B)\cong H^0(X,-mK_X),$$ and by the previous example, we have $|-mK_W|=\{p_1^*(mD_X)\}$ for all $m>0$, where $D_X=C+E$. Hence, any effective $\mathbb{Q}$-divisor $\Delta\sim_{\mathbb{Q}}-K_W$ must equal $p_1^*D_X$. Near a point of the form $(q,b)$ with $q\in C\cap E$, the pair $(W,p_1^*D_X)$ is analytically the product of the non-log-canonical surface germ $(X,D_X)_q$ with a smooth curve. Hence, $(W,\Delta)$ is not log canonical. Therefore, $W$ is not of Calabi--Yau type.
\end{example}

\bibliographystyle{habbvr}
\bibliography{biblio}

@article {Sak84,
    AUTHOR = {Sakai, Fumio},
     TITLE = {Anticanonical models of rational surfaces},
   JOURNAL = {Math. Ann.},
  FJOURNAL = {Mathematische Annalen},
    VOLUME = {269},
      YEAR = {1984},
    NUMBER = {3},
     PAGES = {389--410},
      ISSN = {0025-5831,1432-1807},
   MRCLASS = {14J26 (14J17)},
  MRNUMBER = {761313},
MRREVIEWER = {L.\ B\u adescu},
       DOI = {10.1007/BF01450701},
       URL = {https://doi.org/10.1007/BF01450701},
}

@article {HP,
    AUTHOR = {Hwang, DongSeon and Park, Jinhyung},
     TITLE = {Redundant blow-ups of rational surfaces with big anticanonical
              divisor},
   JOURNAL = {J. Pure Appl. Algebra},
  FJOURNAL = {Journal of Pure and Applied Algebra},
    VOLUME = {219},
      YEAR = {2015},
    NUMBER = {12},
     PAGES = {5314--5329},
      ISSN = {0022-4049,1873-1376},
   MRCLASS = {14J26 (14E05 14J17)},
  MRNUMBER = {3390023},
MRREVIEWER = {Amanda\ Knecht},
       DOI = {10.1016/j.jpaa.2015.05.015},
       URL = {https://doi.org/10.1016/j.jpaa.2015.05.015},
}

@article {Leh14,
    AUTHOR = {Lehmann, Brian},
     TITLE = {Algebraic bounds on analytic multiplier ideals},
   JOURNAL = {Annales de l'Institut Fourier},
    VOLUME = {64},
      YEAR = {2014},
    NUMBER = {3},
     PAGES = {1077--1108},
       DOI = {10.5802/aif.2874},
       URL = {https://doi.org/10.5802/aif.2874},
  MRNUMBER = {3330164},
}

@article {LX18,
    AUTHOR = {Li, Chi and Xu, Chenyang},
     TITLE = {Stability of valuations: higher rational rank},
   JOURNAL = {Peking Math. J.},
  FJOURNAL = {Peking Mathematical Journal},
    VOLUME = {1},
      YEAR = {2018},
    NUMBER = {1},
     PAGES = {1--79},
      ISSN = {2096-6075,2524-7182},
   MRCLASS = {14E30 (14B05 32Q26)},
  MRNUMBER = {4059992},
MRREVIEWER = {Jingjun\ Han},
       DOI = {10.1007/s42543-018-0001-7},
       URL = {https://doi.org/10.1007/s42543-018-0001-7},
}

@article {Oht22,
    AUTHOR = {Ohta, Rikito},
     TITLE = {On the relative version of {M}ori dream spaces},
   JOURNAL = {Eur. J. Math.},
  FJOURNAL = {European Journal of Mathematics},
    VOLUME = {8},
      YEAR = {2022},
     PAGES = {S147--S181},
      ISSN = {2199-675X,2199-6768},
   MRCLASS = {14E30 (14E05 14J45)},
  MRNUMBER = {4452841},
MRREVIEWER = {Zhan\ Li},
       DOI = {10.1007/s40879-022-00552-6},
       URL = {https://doi.org/10.1007/s40879-022-00552-6},
}

@article {JM12,
    AUTHOR = {Jonsson, Mattias and Musta\c{t}\u{a}, Mircea},
     TITLE = {Valuations and asymptotic invariants for sequences of ideals},
   JOURNAL = {Ann. Inst. Fourier (Grenoble)},
  FJOURNAL = {Universit\'e{} de Grenoble. Annales de l'Institut Fourier},
    VOLUME = {62},
      YEAR = {2012},
    NUMBER = {6},
     PAGES = {2145--2209},
      ISSN = {0373-0956,1777-5310},
   MRCLASS = {14F18 (12J20 14B05)},
  MRNUMBER = {3060755},
MRREVIEWER = {Carlos\ Galindo},
       DOI = {10.5802/aif.2746},
       URL = {https://doi.org/10.5802/aif.2746},
}

@article {Xu20,
    AUTHOR = {Xu, Chenyang},
     TITLE = {A minimizing valuation is quasi-monomial},
   JOURNAL = {Ann. of Math. (2)},
  FJOURNAL = {Annals of Mathematics. Second Series},
    VOLUME = {191},
      YEAR = {2020},
    NUMBER = {3},
     PAGES = {1003--1030},
      ISSN = {0003-486X,1939-8980},
   MRCLASS = {14E30 (14J17 14J45)},
  MRNUMBER = {4088355},
MRREVIEWER = {Yuchen\ Liu},
       DOI = {10.4007/annals.2020.191.3.6},
       URL = {https://doi.org/10.4007/annals.2020.191.3.6},
}

@book {Xu25,
    AUTHOR = {Xu, Chenyang},
     TITLE = {K-stability of {F}ano varieties},
    SERIES = {New Mathematical Monographs},
    VOLUME = {50},
 PUBLISHER = {Cambridge University Press, Cambridge},
      YEAR = {2025},
     PAGES = {xi+411},
      ISBN = {978-1-009-53877-0; [9781009538763]},
   MRCLASS = {14J45 (32Q26)},
  MRNUMBER = {4893062},
}

@article {CP16,
    AUTHOR = {S. Choi and J. Park},
     TITLE = {Potentially non-klt locus and its applications},
   JOURNAL = {Math. Ann.},
  FJOURNAL = {Mathematische Annalen},
    VOLUME = {366},
      YEAR = {2016},
    NUMBER = {1-2},
     PAGES = {141--166},
    note     = {See \href{https://arxiv.org/abs/1412.8024}  {arXiv:1412.8024v2} for updates.},
      ISSN = {0025-5831,1432-1807},
   MRCLASS = {14E30 (14J17 14J45 14M22)},
  MRNUMBER = {3552236},
MRREVIEWER = {Wenhao\ Ou},
       DOI = {10.1007/s00208-015-1317-6},
       URL = {https://doi.org/10.1007/s00208-015-1317-6},
}

@Misc{S,
  author       = {Shokurov, V. V.},
  howpublished = {\href{https://arxiv.org/abs/2012.06495}{arXiv:2012.06495}},
  title        = {Existence and boundedness of $n$-complements},
  year         = {2020},
  copyright    = {arXiv.org perpetual, non-exclusive license},
  publisher    = {arXiv},
}

@Article{B,
  author    = {C. Birkar},
  journal   = {Ann. of Math. (2)},
  title     = {{Anti-pluricanonical systems on Fano varieties}},
  year      = {2019},
  number    = {2},
  pages     = {345--463},
  volume    = {190},
  doi       = {10.4007/annals.2019.190.2.1},
  fjournal  = {Annals of Mathematics},
  publisher = {Department of Mathematics of Princeton University},
  url       = {https://doi.org/10.4007/annals.2019.190.2.1},
}

@article{G,
	author = {Y. Gongyo},
	Journal = {J. Algebraic Geom.},
	Fjournal = {Journal of Algebraic Geometry},
	pages = {549--564},
	title = {Abundance theorem for numerically trivial log canonical divisors of semi-log canonical pairs},
	volume = {22},
	year = {2012}
}

@Article{Z,
  author  = {D.-Q. Zhang},
  journal = {J. Math. Kyoto. Univ.},
  title   = {Logarithmic {E}nriques surfaces},
  year    = {1991},
  issn    = {2156-2261},
  volume  = {31},
  doi     = {10.1215/kjm/1250519795},
}

@InCollection{Totaro2012,
  author    = {Totaro, Burt},
  booktitle = {Current developments in algebraic geometry. Selected papers based on the presentations at the workshop ``Classical algebraic geometry today'', MSRI, Berkeley, CA, USA, January 26--30, 2009},
  publisher = {Cambridge: Cambridge University Press},
  title     = {Algebraic surfaces and hyperbolic geometry},
  year      = {2012},
  isbn      = {978-0-521-76825-2},
  pages     = {405--426},
  language  = {English},
  zbl       = {1253.14040},
  zbmath    = {6092086},
}

@book {Nak04,
    AUTHOR = {N. Nakayama},
     TITLE = {Zariski-decomposition and abundance},
    SERIES = {MSJ Memoirs},
    VOLUME = {14},
 PUBLISHER = {Mathematical Society of Japan, Tokyo},
      YEAR = {2004},
     PAGES = {xiv+277},
      ISBN = {4-931469-31-0},
   MRCLASS = {14C20 (14E15 14E30 14J10)},
  MRNUMBER = {2104208},
MRREVIEWER = {Tommaso\ De Fernex},
}

@article{SaitoTakebeTerajima,
  author       = {Saito, Masa-Hiko and Takebe, Taro and Terajima, Hitomi},
  title        = {Deformation of {O}kamoto--{P}ainlev\'e pairs and {P}ainlev\'e equations},
  journal      = {Journal of Algebraic Geometry},
  volume       = {11},
  number       = {2},
  pages        = {311--362},
  year         = {2002},
  doi          = {10.1090/S1056-3911-01-00316-2},
  eprint       = {math/0006026},
  archivePrefix= {arXiv},
  primaryClass = {math.AG}
}

@article {LX25,
    AUTHOR = {Liu, Jihao and Xie, Lingyao},
     TITLE = {Relative {N}akayama-{Z}ariski decomposition and minimal models
              of generalized pairs},
   JOURNAL = {Peking Math. J.},
  FJOURNAL = {Peking Mathematical Journal},
    VOLUME = {8},
      YEAR = {2025},
    NUMBER = {2},
     PAGES = {299--349},
      ISSN = {2096-6075,2524-7182},
   MRCLASS = {14E30 (14C20 14E05 14J17)},
  MRNUMBER = {4910986},
MRREVIEWER = {Sheng\ Meng},
       DOI = {10.1007/s42543-023-00076-2},
       URL = {https://doi.org/10.1007/s42543-023-00076-2},
}

@article{Favre,
  author       = {Favre, Charles},
  title        = {Holomorphic self-maps of singular rational surfaces},
  journal      = {Publicacions Matem\`atiques},
  volume       = {54},
  number       = {2},
  pages        = {389--432},
  year         = {2010},
  doi          = {10.5565/PUBLMAT_54210_06},
  eprint       = {0809.1724},
  archivePrefix= {arXiv},
  primaryClass = {math.AG}
}

@article {CHP,
    AUTHOR = {Choi, SungRak and Hwang, DongSeon and Park, Jinhyung},
     TITLE = {Factorization of anticanonical maps of {F}ano type variety},
   JOURNAL = {Int. Math. Res. Not. IMRN},
  FJOURNAL = {International Mathematics Research Notices. IMRN},
      YEAR = {2015},
    NUMBER = {20},
     PAGES = {10118--10142},
      ISSN = {1073-7928,1687-0247},
   MRCLASS = {14J45 (14E30)},
  MRNUMBER = {3455861},
MRREVIEWER = {Sho\ Tanimoto},
       DOI = {10.1093/imrn/rnu274},
       URL = {https://doi.org/10.1093/imrn/rnu274},
}

@article {CJL25,
    AUTHOR = {Choi, SungRak and Jang, Sungwook and Lee, Dae-Won},
     TITLE = {On minimal model program and {Z}ariski decomposition of
              potential triples},
   JOURNAL = {Taiwanese J. Math.},
  FJOURNAL = {Taiwanese Journal of Mathematics},
    VOLUME = {29},
      YEAR = {2025},
    NUMBER = {6},
     PAGES = {1261--1274},
      ISSN = {1027-5487,2224-6851},
   MRCLASS = {14E30 (14J17)},
  MRNUMBER = {5003252},
       DOI = {10.11650/tjm/250406},
       URL = {https://doi.org/10.11650/tjm/250406},
}

@article {Nik,
    AUTHOR = {Nikulin, Viacheslav V.},
     TITLE = {A remark on algebraic surfaces with polyhedral {M}ori cone},
   JOURNAL = {Nagoya Math. J.},
  FJOURNAL = {Nagoya Mathematical Journal},
    VOLUME = {157},
      YEAR = {2000},
     PAGES = {73--92},
      ISSN = {0027-7630,2152-6842},
   MRCLASS = {14J26 (14C22)},
  MRNUMBER = {1752476},
MRREVIEWER = {Arnaud\ Beauville},
       DOI = {10.1017/S0027763000007194},
       URL = {https://doi.org/10.1017/S0027763000007194},
}

@misc{KL,
      title={Minimal model program on the generic fiber of log {C}alabi-{Y}au type fibration}, 
      author={Donghyeon Kim and Dae-Won Lee},
      year={2025},
      howpublished = {\href{https://arxiv.org/abs/2512.02429}{arXiv:2512.02429}},
}

@Article{Okawa2016,
  author   = {S. Okawa},
  journal  = {Math. Ann.},
  title    = {On images of {Mori} dream spaces},
  year     = {2016},
  issn     = {0025-5831},
  number   = {3-4},
  pages    = {1315--1342},
  volume   = {364},
  doi      = {10.1007/s00208-015-1245-5},
  language = {English},
  zbl      = {1341.14007},
  zbmath   = {6559840},
}

@book {KM98,
    AUTHOR = {Koll\'ar, J\'anos and Mori, Shigefumi},
     TITLE = {Birational geometry of algebraic varieties},
    SERIES = {Cambridge Tracts in Mathematics},
    VOLUME = {134},
      NOTE = {With the collaboration of C. H. Clemens and A. Corti,
              Translated from the 1998 Japanese original},
 PUBLISHER = {Cambridge University Press, Cambridge},
      YEAR = {1998},
     PAGES = {viii+254},
      ISBN = {0-521-63277-3},
   MRCLASS = {14E30},
  MRNUMBER = {1658959},
MRREVIEWER = {Mark\ Gross},
       DOI = {10.1017/CBO9780511662560},
       URL = {https://doi.org/10.1017/CBO9780511662560},
}

@book {Kol13,
    AUTHOR = {Koll\'ar, J\'anos},
     TITLE = {Singularities of the minimal model program},
    SERIES = {Cambridge Tracts in Mathematics},
    VOLUME = {200},
      NOTE = {With a collaboration of S\'andor Kov\'acs},
 PUBLISHER = {Cambridge University Press, Cambridge},
      YEAR = {2013},
     PAGES = {x+370},
      ISBN = {978-1-107-03534-8},
   MRCLASS = {14E30 (14B05)},
  MRNUMBER = {3057950},
MRREVIEWER = {Tommaso\ De Fernex},
       DOI = {10.1017/CBO9781139547895},
       URL = {https://doi.org/10.1017/CBO9781139547895},
}

@Misc{CJKL25,
  author       = {S. Choi and S. Jang and D. Kim and D.-W. Lee},
  howpublished = {\href{https://arxiv.org/abs/2506.13637}{arXiv:2506.13637}},
  title        = {A valuative approach to the anticanonical minimal model program},
  year         = {2025},
}

\end{document}